\newtheorem{Theorem}{Theorem}
\newtheorem{Lemma}[Theorem]{Lemma}
\newtheorem{Definition}{Definition}
\newtheorem{Proposition}[Theorem]{Proposition}
\newtheorem{Corollary}[Theorem]{Corollary}
\renewcommand{\@evenhead}{\sc\thepage
\centerline{ F. Fass\`o, L.~C. Garc{\'\i}a-Naranjo and N. Sansonetto }}
\renewcommand{\@oddhead}{\sc 
\centerline{ Moving energies in nonholonomic systems ---
revised version }
\hfill \thepage}
\renewcommand\subsection{\@startsection {subsection}{1}{\z@}%
                                   {3.5ex \@plus 1ex \@minus .2ex}%
                                   {-1em}%
                                   {\normalfont\bfseries}}
\newcommand{\for}[1]{(\ref{#1})}
\newcommand\ugarr{\!\!\!&=&\!\!\!}
\newcommand{\G}{\Gamma}
\renewcommand{\P}{\Phi}
\newcommand{\s}{\sigma}
\renewcommand{\o}{\omega}
\renewcommand{\O}{\Omega}
\renewcommand{\r}{\rho}
\newcommand{\cC}{\mathcal{C}}                 
\newcommand{\cD}{\mathcal{D}}                 
\newcommand{\cE}{\mathcal{E}}                 
\newcommand{\cM}{\mathcal{M}}                 
\newcommand{\cR}{\mathcal{R}}       
\newcommand\cS{\mathcal {S}}
\newcommand{\Liealg}{\mathfrak g}
\newcommand{\so}{\frak{so}}     
\newcommand{\I}{\mathbb{I}}     
\newcommand{\Ad}{\mathrm{Ad}} 
\newcommand{\reali}{\mathbb{R}}
\newcommand{\rdue}{\reali^2}
\newcommand{\rtre}{\reali^3}
\newcommand{\renne}{\reali^n}
\newcommand{\der}[2]{\frac{\partial#1}{\partial#2}}
\newcommand{\dder}[3]{\frac{\partial^2#1}{\partial#2 \partial#3} }
\newcommand{\beq}[1]{\begin{equation}\label{#1}}
\newcommand{\eeq}{\end{equation}}
\newcommand\bList{
\begin{list}{}{\leftmargin2em\labelwidth1.5em\labelsep.5em\itemindent0em
\topsep.3ex\itemsep-.4ex} }
\newcommand\eList{\end{list}}
\newcommand\tg[1]{#1^{TQ}}
\newcommand\tgG[1]{#1^{TG}}
\newcommand\Emov[2]{E_{#1,#2}}
\newcommand\gpairing[2]{\langle #1,#2\rangle_{\Liealg^*\textrm{-}\Liealg} }
\newcommand\ip[2]{\langle #1,#2\rangle_{\Liealg} }
\newcommand\PS[2]{#1\cdot #2}
\newcommand\Left{L}
\newcommand\Right{R}
\newcommand\SO[1]{\mathrm{SO}(#1)}
\newcommand\SE[1]{\mathrm{SE}(#1)}
\newcommand\C{C}
\newcommand{\Fs}{\Sigma_s}
\newcommand{\Fb}{\Sigma_b}
\begin{document}

\title{\Large \bf Moving energies as first integrals \\
of nonholonomic systems with affine constraints}

\author{\normalsize
Francesco Fass\`o\footnote{\footnotesize Universit\`a di Padova,
Dipartimento di Matematica ``Tullio Levi Civita'',
Via Trieste 63, 35121 Padova, Italy.
Email: {\tt fasso@math.unipd.it} }
, 
Luis C. Garc\'{\i}a-Naranjo\footnote{\scriptsize
Departamento de Matem\'aticas y Mec\'anica,
IIMAS-UNAM,
Apdo Postal 20-726,  Mexico City,  01000, Mexico. Email:
\tt{luis@mym.iimas.unam.mx} } \
and
Nicola Sansonetto\footnote{\footnotesize Universit\`a di
Verona,
Dipartimento di Informatica, Strada le Grazie 15, 37134 Verona, Italy.
Email: {\tt nicola.sansonetto@univr.it} }
}

\date{\small{ \today}}
\vskip 1truecm
\maketitle

{\small
\begin{abstract} In nonholonomic mechanical systems with constraints
that are affine (linear nonhomogeneous) functions of the velocities,
the energy is typically not a first integral. It was shown in 
\cite{FS2016} that, nevertheless, there exist modifications of the
energy, called there moving energies, which under suitable conditions
are first integrals. The first goal of this paper is to study the
properties of these functions and the conditions that lead to their
conservation. In particular, we enlarge the class of moving energies
considered in \cite{FS2016}. The second goal of the paper is to
demonstrate the relevance of moving energies in nonholonomic
mechanics. We show that certain first integrals of some well known
systems (the affine Veselova and LR systems), which had been detected
on a case-by-case way, are instances of moving energies. Moreover, we
determine conserved moving energies for a class of affine systems on
Lie groups that include the LR systems, for a heavy
convex rigid  body that rolls without slipping on a uniformly
rotating plane, and for an $n$-dimensional generalization of the
Chaplygin sphere problem to a uniformly rotating
hyperplane.
\end{abstract}
}

\vskip3mm
{\scriptsize
\noindent
{\bf Keywords:} Moving energies $\cdot$ Nonholonomic mechanical
systems $\cdot$  Affine constraints $\cdot$  Nonhomogeneous
constraints $\cdot$ Conservation of energy $\cdot$ LR systems $\cdot$ 
Rolling rigid bodies $\cdot$ Veselova system
\vskip1mm
\noindent
{\bf MSC:} 70F25, 37J60, 37J15, 70E18
}

\section{Introduction} 

\subsection{Moving energies. }
Conservation of energy in time-independent mechanical systems with
nonholonomic constraints is an important feature that has received
extended consideration. It is well known that the energy is conserved
if the nonholonomic constraints are linear (or more generally
homogeneous) functions of the velocities \cite{pars,NF} 
and that this typically does not happen if the constraints are
arbitrary nonlinear functions of the velocities (see e.g. \cite{BKMM,
marle2003, kobayashi-oliva}). The situation is better understood in
the case of systems with nonholonomic constraints that are  {\it
affine} (namely, linear non-homogeneous) functions of the
velocities. This case is important in mechanics because it is
encountered in systems formed by rigid bodies that roll without
sliding on surfaces that move in a preassigned way; instances of these
systems have been considered, e.g., by Routh \cite{routh} and more
recently in \cite{BorMamK}. This is the case that we consider in this
paper. 

The conditions under which the energy is conserved in nonholonomic
systems with affine constraints have been clarified in \cite{FS2015},
and are very special. However, it was noticed and proved
in \cite{FS2016} that, in such systems, when the energy is not
conserved, there may exist modifications of it which are conserved.
Such functions were called {\it moving energies} in \cite{FS2016}
because they were there constructed by means of time-dependent
changes of coordinates that transform the nonholonomic system with
affine constraints into a nonholonomic system with {linear}
constraints. If time-independent, the transformed system has a
conserved energy, and the moving energy is the pull-back of this
function to the original coordinates---namely, the energy of the
transformed system written in the original coordinates. This moving
energy is always a conserved function for the original system, but
the interesting case is when it is time-independent. In
\cite{FS2016}, the time-independence of the moving energy was linked
to the presence of symmetries of the system. In such a case the
conserved moving energy is the sum of two non-conserved functions:
one is the energy, and the other is the momentum of an infinitesimal
generator of the symmetry group.

This mechanism has an elementary mechanical interpretation for
systems that consist of rigid bodies constrained to roll without
sliding on moving surfaces: here the moving energy is the energy of
the system relative to a moving reference frame in which the surface
is at rest---so that the no-sliding constraint is linear---written
however in the original coordinates. Most of the examples of
moving energies produced so far \cite{FS2016,BMB2015} are indeed of
this type.\footnote{Reference \cite{BMB2015}, that refers to the
moving energy as Jacobi integral (see also Section \ref{ss:2.3}),
claims that this mechanism can be extended to less
symmetrical situations. It is unclear to us to which extent this
goal can be achieved: without symmetry, the moving energy exists but
is usually time-dependent. For more precise comments see footnote nr.
2 of \cite{FS2015}. We note that while in \cite{FS2016} and in the
present article the moving energy is regarded as a first integral of
the original system, and is therefore written in the original
coordinates, in \cite{BMB2015} it is written in the new,
time-dependent coordinates.}

The existence of this energy-like first integral may play an
important role. Reference \cite{FS2016} proved its existence (without
however determining it) in the system formed by a heavy homogeneous
sphere that rolls without sliding inside an upward convex surface of
revolution that rotates uniformly. The existence of this integral,
together with that of two other first integrals that had been
previously determined in \cite{BorMamK}, implies the integrability of
this system, in the sense of quasi-periodicity of the dynamics
\cite{FS2016,DallaVia2017}. The expression of the moving energy for
a homogeneous sphere that rolls on an arbitrary rotating surface was
given in \cite{BMB2015}, and there used to show integrability by the
Euler-Jacobi theorem (which is weaker than quasi-periodicity) 
in the case of axisymmetric surfaces. 
Other systems, whose conserved moving energy is given in \cite{BMB2015}, 
include the Chaplygin sleigh on a rotating plane and the special case of the 
affine Suslov problem considered in \cite{LGN2014} where the axis of forbidden 
rotations is also an axis of symmetry of the body.
The authors of \cite{BMB2015} also remark
that a known first integral of the  Veselova system
\cite{Veselova,Veselov-Veselova-1986} is an instance of a moving
energy.

It is therefore important to understand how general and effective this
mechanism can be, and the aim of the present paper is to investigate
this question. 

\subsection{Aim of the paper. } 
First, we will extend in a natural way the notion of moving energy,
going beyond the relation to a time-dependent change of coordinates.
Specifically, we define here a {moving energy} as the difference
between the energy of the system and the momentum of a vector field
defined on the configuration manifold of the system. This will allow a
clearer, simpler and more general treatment. 

We will investigate which vector fields produce a conserved moving
energy (Proposition \ref{thm1}) and how this relates to the existence
of symmetries of the Lagrangian (Corollary
\ref{Cor:Sym}).\footnote{While the writing of this article was almost
completed, we were informed of the existence of
the very recent article \cite{jovanovic}. Following the approach in
\cite{jovanovic2016}, this article considers
moving energies from the point of view of Noether symmetries for
time-dependent systems---calling them Noether integrals---and
generalizes to this context some of the results of \cite{FS2016}; in
particular, it proves a statement analogous to Corollary~\ref{Cor:Sym}.} 
Then, we will investigate 
some properties of moving energies, including their nonuniqueness 
(Propositions~\ref{p:2MovEn} and~\ref{p:2MovEn=}).

We will also compare this extended
notion of moving energy to the one considered so far and described
above. We will say that a moving energy is {\it kinematically
interepretable} if it arises from a time-dependent change of
coordinates, as in the mechanism described above, and we will
characterize the moving energies which are kinematically interpretable
(Proposition \ref{p:MovEn-KI}). 

Next, we will show that certain known first integrals of
some important nonholonomic systems with affine constraints are
instances of moving energies. Specifically, we will
consider a class of affine nonholonomic systems on Lie groups which
includes the affine Veselova system
\cite{Veselova,Veselov-Veselova-1986} and the more
general affine LR systems introduced in
\cite{Veselov-Veselova-1988}. 

Finally, we will determine the explicit form of
a conserved moving energy for two other important
nonholonomic systems, for which the existence of an energy-like first
integral was so far unknown: a convex body that rolls on a rotating
plane and the $n$-dimensional Chaplygin sphere that rolls
without slipping on a rotating hyperplane.

We will also indicate some dynamical consequences
of their existence (Corollary \ref{c:UnboundedMotions}) and remark
their usage for the Hamiltonization of reduced systems (see the
Remark at the end of section \ref{ss:LR}). 

The resulting picture is that the notion of moving energy is a
unifying concept in the study of nonholonomic systems with affine
constraints. Our view is that this class of functions---rather than
the energy itself---should be considered the primary `energy-like'
first integrals to be considered in these systems.

\subsection{Outline of the paper. } 
In section 2 we recall the general framework for mechanical
systems with affine nonholonomic constraints and review some of
their properties, that are needed in the subsequent study. In
particular,  given the role played by symmetry in the conservation of
moving energies, we give there a `Noether theorem' for nonholonomic
systems with affine constraints that extends previous formulations
(Proposition~\ref{prop5}). In section 3 we introduce the moving
energies, give conditions for their conservation, and
analyze some of their properties. In section~4 we investigate the
relation between the notion of moving energy introduced in section~3
and the original definition of \cite{FS2016}.

Sections 5, 6 and 7 are devoted to show that  the
aforementioned examples possess a moving energy and to compute it
explicitly. We point out that this analysis
only requires the definitions  presented in section 2 and
the results of subsection 3.1. In fact, the  reader who is interested
in applying the methods of this paper  to specific examples of
nonholonomic systems with affine constraints  need only concentrate
on these sections.

The Appendix is devoted to some additional material relative to the 
two systems studied in Sections 6 and 7. In both cases we identify a
symmetry group and obtain the reduced equations of motion.

Throughout the work, we assume that all objects (functions, manifolds,
distributions, etc.) are smooth and that all vector fields are
complete. If $\cE$ is a distribution over a manifold $Q$, then 
$\G(\cE)$ denotes the space of sections of $\cE$.

\section{Nonholonomic systems with affine constraints }

\subsection{The setting. }
We start with a Lagrangian system with $n$-dimensional configuration
manifold ${Q}$ and Lagrangian ${L}:T{Q}\to\reali$, that describes a
holonomic mechanical system. We assume
that the Lagrangian has the mechanical form 
\begin{equation}\label{hatL}
  {L}={T}+{b} - {V}\circ\pi \,,
\end{equation}
where ${T}$ is a Riemannian metric on $Q$, ${b}$ is a 1-form on ${Q}$
regarded as a function on $T{Q}$, $V$ is a function on $Q$ and $\pi:
T{Q} \to {Q}$ is the tangent bundle projection. We interpret $T$ as
the kinetic energy, $V$ as the potential energy of the positional
forces that act on the system, and the 1-form $b$ as the generalized
potential of the gyrostatic forces that act on the system.

We add now the nonholonomic constraint that, at each point ${q}\in
{Q}$, the velocities of the system belong to an affine subspace
$\cM_q$ of the tangent space $T_ qQ$. Specifically, we assume that
there are a nonintegrable distribution $\cD$ on $Q$ of constant rank
$r$, with $1<r<n$, and a vector field $Z$ on $Q$ such that, at each
point $q\in Q$,
\beq{ConstrDistr}
  \cM_ q=Z(q)+\cD_ q \,.
\eeq
Note that the vector field $Z$ is defined up to a section
of~$\cD$. The affine distribution $\cM$ with fibers
$\cM_ q$ may also be regarded as a submanifold
$M\subset TQ$ of dimension $n+r$. This submanifold is an
affine subbundle of $TQ$ of rank $r$ and will be called the {\it
constraint manifold}. The case of linear constraints is recovered
when the vector field $Z$ is {\it horizontal} (namely, it is
a section of the distribution $\cD$), since then $\cM=\cD$. 

We assume that the nonholonomic constraint is ideal, namely, that it
satisfies d'Alembert principle: when the system is
in a configuration $q\in Q$, then the set of reaction forces
that the nonholonomic constraint is capable of exerting  coincides
with the annihilator $\cD_ q^\circ$ of $\cD_ q$ 
(see e.g. \cite{pagani91,marle2003}). Under this hypothesis
there is a unique function $R:M\to\cD^\circ$, which is
interpreted as associating an ideal reaction force $ R(v_q)$ to each
constrained kinematic state $v_q\in M$, with the property
that the equations of motion of the system are given by the 
restriction to $M$ of Lagrange equations with reaction
forces $R$; for a detailed proof, see \cite{FS2015}. We will denote
$(L,Q,\cM)$ the nonholonomic system determined by these data.

In bundle coordinates $(q,\dot q)$ on $TQ$ the Lagrangian
$L$ has the form
\begin{equation}\label{Lagr}
  L(q,\dot q) = \frac12\dot q\cdot
  A(q)\dot q+b(q)\cdot \dot q - V(q)
\end{equation}
with $A(q)$ an $n\times n$ symmetric nonsingular matrix and
$b(q)\in\reali^n$. (In order to keep the notation to a minimum we do not
distinguish between global objects and their coordinate
representatives). Here, and in all expressions written in
coordinates, the dot denotes the standard scalar product in
$\renne$. In bundle coordinates, the fibers of the distribution $\cD$
can be described as the null spaces of a $q$-dependent $k\times n$
matrix $S(q)$ that has everywhere rank $k$, with $k=n-r$: 
$$
   \cD_q=\{\dot q\in T_qQ \,:\; S(q)\dot q=0\} \,.
$$
In turn
$
\cM_q = \{\dot q\in T_qQ \,:\; S(q)(\dot q-Z(q))=0\} 
$
and 
$$
  M = \big\{ (q,\dot q) \,:\, S(q)\dot q + s(q) = 0 
  \big\}
$$
with
$$
   s(q) = -S(q) Z(q) \in \reali^n\,.
$$
In coordinates, the equations of motion of the  nonholonomic
mechanical system $(L,Q,\cM)$ are
\begin{equation}
\label{eq:LagrEq}
  \Big( \frac{d}{dt}\frac{ \partial L }{\partial \dot q} 
  - 
  \frac{\partial L}{\partial q}\Big) \Big|_M \, = \, R|_M 
\end{equation}
with
\begin{equation}\label{eq:RF1}
  R=S^T(SA^{-1}S^T)^{-1} ( SA^{-1} \ell - \s )
\end{equation}
where $\ell\in\reali^n$ and $\s\in\reali^k$ have components
\begin{equation}\label{eq:RF2}
  \ell_i = \sum_{j=1}^n \dder L {\dot q_i}{q_j}\dot q_j 
  - \der L {q_i}
  \,,\qquad 
  \sigma_a = 
  \sum_{i,j=1}^n \der{S_{ai}}{q_j} \dot q_i\dot q_j + 
  \sum_{j=1}^n \der{s_{a}}{q_j} \dot q_j 
\end{equation}
($i=1,\ldots,n$, $a = 1,\ldots, k$). For details see
\cite{FS2015};  in the case of linear constraints, these or analogue
expressions are given in \cite{ago,benenti,FRS}. We note that the
restriction of $R$ to $M$ is independent of the arbitrariness that 
affects the choices of the vector field $Z$, of the matrix $S$ and
of the vector $s$, see~\cite{FS2015}.

\subsection{The reaction-annihilator distribution. }
We need to introduce now the so-called {reaction-annihilator 
distribution} $\cR^\circ$, from \cite{FRS,FS2015}. This object plays
a central role in the conservation of energy and of moving 
energies of nonholonomic systems with affine constraints 
\cite{FS2015,FS2016} (as well as
in the conservation of momenta in nonholonomic systems with either
linear or affine constraints \cite{FRS,FS2015}).

The observation underlying the consideration of this object is that,
while the condition of ideality assumes that, at each point $ q\in
Q$, the constraint is capable of exerting all reaction forces that
lie in $\cD_ q^\circ$, expression (\ref{eq:RF1}) shows that,
ordinarily, only a subset of these possible reaction forces is
actually exerted in the motions of the system. Specifically, in
bundle coordinates, $\cD_ q^\circ$ is the orthogonal complement to
$\ker S(q)$, namely the range of $S(q)^T$, but the map
$$
   S^T(SA^{-1}S^T)^{-1} (SA^{-1} \ell - \s ) \big|_{M_q} 
   \;:\;
   M_q\to\mathrm{range}\big[ S(q)^T\big]
$$
need not be surjective. Instead, the reaction forces that the
constraint exerts, when the system $(L,Q,\cM)$ is in a configuration
$q\in Q$ with any possible velocity in $\cM_ q$, are the elements of
the set
$$
  \cR_q:= 
  \bigcup_{v_q\in \cM_ q}
  R(v_q) \,,
$$
which is a subset of $\cD_q^\circ$---and typically a proper subset of
it. For instance, an extreme case is that of a heavy
homogeneous sphere that rolls without sliding on a steady
horizontal plane: all motions consist of the ball rolling with
constant linear and angular velocitiy, as in the system without the
nonholonomic constraint \cite{pars,NF}, and hence
$\cR_q=\{0\}$ at all points $q$.

The {\it reaction-annihilator distribution} $\cR^\circ$ of the
nonholonomic system $(L,Q,\cM)$ is the distribution on $Q$ whose fiber
$\cR^\circ_ q$ at $q\in Q$ is the annihilator of $\cR_ q$. 
In other words, a vector field $Y$ on $Q$ is a section of
$\cR^\circ$ if and only if, {\it in all constrained kinematic states}
of the  the system, the reaction force does no work on it,
namely\footnote{Here and in the sequel, except in Section
\ref{ss:7.2}, $\langle \ ,\ \rangle$ denotes the cotangent-tangent
pairing. }
$$
  \langle R(v_q) , Y( q) \rangle =0 \qquad 
  \forall v_q\in T_qM ,\ q\in Q \,.
$$
This is a system-dependent condition, which is weaker than
being a section of $\cD$ because
$$
  \cD_ q \subseteq \cR_ q^\circ
  \qquad \forall \, q \in Q 
  \,.
$$

For further details and examples on the reaction-annihilator
distribution see \cite{FRS,FGS2008,FGS2009,jotz,FS2015,jovanovic} and
for a discussion of its relation to d'Alembert principle see
\cite{FS2009}. 

\subsection{Conservation of energy. }\label{ss:2.3}
The {\it energy} of the nonholonomic system $(L,Q,\cM)$ is the
restriction $E_L|_M$ to the constraint manifold $M$ of the energy
$$
    E_L:=\langle p,\cdot\rangle - L
$$
of the Lagrangian system $(L,Q)$. Here $p:TQ\to \reali$ is the
momentum 1-form generated by the Lagrangian $L$, regarded as a
function on $TQ$.
If the Lagrangian is of the form \for{hatL}, then in coordinates
$p=\der L{\dot q} = A(q)\dot q+b(q)$, and
$$
  E_L=T+V\circ\pi \,.
$$
We note that, in Lagrangian mechanics, the function $E_L$ is variously
called energy, generalized energy, Jacobi integral,
Jacobi-Painlev\'e integral, sometimes with slightly different
meanings attached to each of these terms (see the discussion in
a remark in  section 1.1 of \cite{FS2015}). We simply call it energy.

As we have already mentioned, it is well known that the energy is
always conserved if the constraints are linear in the velocities. For
affine constraints, with constraint distribution as in
\for{ConstrDistr}, the situation is as follows:

\begin{Proposition}\label{p:ConsEn} {\rm \cite{FS2015}} The energy of
$(L,Q,\cM=Z+\cD)$ is conserved if and only if
$Z\in\G(\cR^\circ)$.
\end{Proposition}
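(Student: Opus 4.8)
The plan is to establish the standard energy balance along constrained motions and then read off the stated condition from the definition of $\cR^\circ$. Since the Lagrangian \eqref{hatL} carries no explicit time dependence, I would differentiate $E_L=\langle p,\cdot\rangle-L$ along a motion $t\mapsto(q(t),\dot q(t))$ of the system and insert Lagrange's equations \eqref{eq:LagrEq} with reaction force $R$. After the usual cancellation of the $\ddot q$-terms and of $\frac{\partial L}{\partial q}\cdot\dot q$, this yields
\begin{equation*}
  \frac{d}{dt}\big(E_L|_M\big)=\langle R(\dot q),\dot q\rangle \,,
\end{equation*}
so that the rate of change of the energy equals the power of the reaction force. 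Carrying out this computation is the first, and entirely routine, step.

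Next I would exploit the affine structure of the constraint. At each instant the velocity lies in $\cM_q=Z(q)+\cD_q$, so one can write $\dot q=Z(q)+w$ with $w\in\cD_q$. Because the reaction is ideal, $R(\dot q)\in\cD^\circ_q$, whence $\langle R(\dot q),w\rangle=0$ and the balance above reduces to
\begin{equation*}
  \frac{d}{dt}\big(E_L|_M\big)=\langle R(\dot q),Z(q)\rangle \,.
\end{equation*}
Thus $E_L|_M$ is a first integral precisely when the right-hand side vanishes along every motion.

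It then remains to translate this into the membership $Z\in\G(\cR^\circ)$. For sufficiency, if $Z\in\G(\cR^\circ)$ then, by the very definition of the reaction-annihilator distribution, $\langle R(v_q),Z(q)\rangle=0$ for every constrained kinematic state $v_q\in\cM_q$, in particular along every motion, so the energy is conserved. For necessity, I would invoke the existence of a motion through each point of $M$: if the energy is conserved, then evaluating the identity above at $t=0$ for the motion issuing from an arbitrary $v_q\in\cM_q$ gives $\langle R(v_q),Z(q)\rangle=0$ on all of $M$, which is exactly $Z\in\G(\cR^\circ)$.

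I expect the two genuinely delicate points to be book-keeping rather than the computation. First, one must check that the condition $Z\in\G(\cR^\circ)$ is insensitive to the arbitrariness in $Z$; this is settled by the inclusion $\cD\subseteq\cR^\circ$ recorded earlier, since replacing $Z$ by $Z+\xi$ with $\xi\in\G(\cD)$ does not affect membership in $\G(\cR^\circ)$. Second, the passage from ``along every motion'' to ``at every point of $M$'' in the necessity direction relies on the existence of solutions through each constrained state, which is guaranteed by our standing smoothness and completeness assumptions.
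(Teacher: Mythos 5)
Your argument is correct and coincides with the paper's own route: the identity $\frac{d}{dt}E_L\big|_M = R\cdot\dot q = R\cdot Z$ (using $\dot q - Z\in\cD$ and $R\in\cD^\circ$) is exactly the computation the authors carry out inside the proof of Proposition~\ref{thm1}, and the equivalence with $Z\in\G(\cR^\circ)$ is then immediate from the definition of the reaction-annihilator distribution. Your closing remarks on the insensitivity to the choice of $Z$ and on passing from motions to points of $M$ are the right book-keeping and match the paper's conventions.
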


Thus, energy conservation is not a universal property of
nonholonomic systems with affine constraints. Instead, it is a
system-dependent property. In particular, note that $\cR^\circ$
depends on the potentials $b$ and $V$ that enter the Lagrangian, see 
\for{eq:RF2}. Therefore, changing the (active) forces that act on the
system---even within the class of gyrostatic and conservative
forces---may
destroy or restore the conservation of energy. For some examples of
this phenomenon, which includes e.g. a sphere that rolls inside a
rotating cylinder, see~\cite{FS2015}. An extension of Proposition
\ref{p:ConsEn} to a time-dependent setting is given in Corollary 4.2
of \cite{jovanovic}.

\subsection{Conservation of momenta of vector fields. } 
We conclude this short panoramic of nonholonomic systems with affine
constraints with some results on the conservation of momenta of vector
fields and of lifted actions. Even though we will not strictly need these
results in the sequel, they will be useful for appreciating certain
aspects of the conservation of moving energies. Moreover, we will
introduce here some notation and terminology which will be
used throughout this work.

Given a nonholonomic system $(L,Q,\cM)$, we define the {\it momentum}
of a vector field $Y$ on $Q$ as the restriction to $M$ of the function
$$
  J_Y:=\langle p,Y\rangle : TQ\to\reali 
$$
(in coordinates, $J_{Y}(q,\dot q)=\der L{\dot q}(q,\dot q)\cdot Y(q)$).
A geometric characterization of the vector fields whose
momenta are first integrals of a nonholonomic system $(L,Q,\cM)$
with affine constraints does not exist.
However, just as in the case of systems
with linear constraints, see Proposition 2 of \cite{FRS}, we may
characterize those among them which have another property as well.

Here and everywhere in the sequel we denote by $\tg Y$ the tangent
lift of a vector field $Y$ on a manifold $Q$, namely the vector field
on $TQ$ which, in bundle coordinates, is given by $\tg Y= \sum_i
Y_i\partial_{q_i} + \sum_{ij}\dot q_j \der{Y_i}{q_j}\partial_{\dot
q_i}$.

\begin{Proposition}\label{prop5}
Any two of the following three conditions imply the third:
\bList
\item[i.] $Y\in\G(\cR^\circ)$.
\item[ii.] $\hat Y(L)|_M=0$.
\item[iii.] $J_Y|_M$ is a first integral of $(L,Q,\cM)$.
\eList
\end{Proposition}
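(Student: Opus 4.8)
The plan is to reduce all three statements to a single kinematic identity relating the time derivative of the momentum $J_Y$ along motions, the Lie derivative $\hat Y(L)$, and the work done by the reaction force on $Y$. First I would record the purely algebraic identity, valid along any curve $t\mapsto q(t)$ in $Q$ with $\dot q = dq/dt$,
\begin{equation*}
  \hat Y(L) \;=\; \frac{d}{dt}\,\langle p,Y\rangle
  \;-\; \Big\langle \frac{d}{dt}\der{L}{\dot q} - \der{L}{q}\,,\, Y\Big\rangle .
\end{equation*}
This is obtained by expanding the total derivative $\frac{d}{dt}\big(\der{L}{\dot q_i}Y_i\big)$ with the product rule, substituting $\dot Y_i=\sum_j \der{Y_i}{q_j}\dot q_j$, and recognizing the coordinate expression of the tangent lift $\hat Y(L)=\sum_i Y_i\der{L}{q_i}+\sum_{ij}\dot q_j\der{Y_i}{q_j}\der{L}{\dot q_i}$. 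It is nothing but the familiar identity underlying Noether's theorem, and it uses no equations of motion; here $\langle p,Y\rangle = J_Y$ in the notation of the paper.

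The second step is to specialize to motions of the constrained system. Along a solution the Lagrange expression equals the reaction force by the equations of motion \for{eq:LagrEq}, so the bracketed term becomes $\langle R,Y\rangle$ and the identity reads
\begin{equation*}
  \frac{d}{dt}\big(J_Y|_M\big) \;=\; \hat Y(L)|_M \;+\; \langle R,Y\rangle|_M .
\end{equation*}
Since the constrained dynamics is a well-defined (and, by our standing assumptions, complete) vector field on $M$ whose integral curves pass through every point of $M$, and since $R$ is a genuine function on $M$ by \for{eq:RF1}, this holds not merely along one motion but as an identity between three honest functions on $M$: the rate of change of $J_Y|_M$, the restriction $\hat Y(L)|_M$, and the work $\langle R,Y\rangle|_M$ of the reaction on $Y$.

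Finally I would translate the three conditions into the vanishing of these three functions. By the definition of the reaction-annihilator distribution, $Y\in\G(\cR^\circ)$ is precisely the statement $\langle R,Y\rangle|_M=0$ (condition~i); condition~ii is $\hat Y(L)|_M=0$; and $J_Y|_M$ is a first integral exactly when $\frac{d}{dt}(J_Y|_M)=0$ (condition~iii). The displayed identity is then the linear relation $a=b+c$ among the three functions $a=\frac{d}{dt}(J_Y|_M)$, $b=\hat Y(L)|_M$, $c=\langle R,Y\rangle|_M$, with the conditions i, ii, iii asserting respectively $c=0$, $b=0$, $a=0$; any two of them force the third, which is the claim. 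The only genuine labor is the verification of the kinematic identity, a routine but careful coordinate computation; the one point demanding care is the passage from ``constant along each motion'' to ``identically zero on $M$'', which relies on the integral curves of the constrained dynamical vector field covering $M$.
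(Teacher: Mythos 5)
Your proposal is correct and follows essentially the same route as the paper: both expand $\frac{d}{dt}J_Y$ along motions, substitute the constrained equations of motion to obtain the identity $\frac{d}{dt}(J_Y|_M)=\hat Y(L)|_M+\langle R,Y\rangle|_M$, and then read off the ``any two imply the third'' statement from this linear relation together with the definition of $\cR^\circ$.
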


\begin{proof}
We may work in coordinates. It is understood that all functions are
evaluated in $M$, and time derivatives are along the flow of the
equations of motion \for{eq:LagrEq}. Compute
\beq{ddtJ}
  \frac{dJ_Y}{dt}
  = 
  \sum_i\frac {dp_i}{dt}Y_i+\sum_{ij}p_i\dot q_j\der{Y_i}{q_j}
  =
  \sum_i\big(\der L{q_i}+R_i\big) Y_i
  + \sum_{ij}\der{L}{\dot q_i}\dot q_j\der{Y_i}{q_j}
  =
  \hat Y(L)+R\cdot Y \,.
\eeq
From this it follows that, at each point $q\in Q$, the vanishing in all of $\cM_q$
of any two among $\frac{dJ_Y}{dt}$, $R\cdot Y$ and $\hat Y (L)$
implies the vanishing in all of $\cM_q$ of the third. But the
vanishing in all of $\cM_q$ of $R\cdot Y$ is equivalent to the fact
that  $Y$ belongs to the fiber at $q$ of $\cR^\circ$. 
\end{proof}

Let now $\Psi$ be an action of a Lie group $G$ on $Q$. For each $g\in G$
we write as usual $\Psi_{g}(q)$ for $\Psi(g,q)$.
The infinitesimal generator relative to an element $\xi\in\mathfrak
g$, the Lie algebra of $G$, is the vector field
$$
  Y_\xi := \frac d{dt}\Psi_{\exp(t\xi)}\big|_{t=0}
$$
on $Q$. The tangent lift $\Psi^{TQ}$ of $\Psi$ is
the action of $G$ on $TQ$ given by $\Psi^{TQ}_{g}(v_q) = T_q\Psi_g \cdot v_q$
for all $v_q\in TQ$ (in coordinates, $\Psi^{TQ}_g(q,\dot q) = \big(
\Psi_g(q), \Psi'_g(q)\dot q\big)$ with $\Psi'_g = \der{\Psi_g}q$).
For any $\xi\in\mathfrak g$, the $\xi$-component of the momentum map
of $\Psi^{TQ}$ is the momentum $J_{Y_\xi}$ of $Y_\xi$. 

The following consequence of Proposition \ref{prop5} extends a result
in \cite{FS2015} and is a possible
statement of a `Nonholonomic Noether theorem' for nonholonomic
systems with affine constraints:

\begin{Corollary}\label{prop4} Assume that $L$ is invariant under
$\Psi^{TQ}$, namely $L\circ\Psi_g^{TQ}=L$ for all $g\in G$. Given
$\xi\in\mathfrak g$, $J_{Y_\xi}|_M$ is a first integral of
$(L,Q,\cM)$ if and only if $Y_\xi\in\G(\cR^\circ)$.
\end{Corollary}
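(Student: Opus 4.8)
The plan is to obtain the equivalence as an immediate consequence of Proposition~\ref{prop5}, specialized to the vector field $Y=Y_\xi$. That proposition says that any two of the conditions (i) $Y_\xi\in\G(\cR^\circ)$, (ii) $\hat Y_\xi(L)|_M=0$, (iii) $J_{Y_\xi}|_M$ is a first integral, imply the third. Thus, if I can show that the invariance hypothesis forces condition (ii) to hold for $Y_\xi$, then the two implications ``(i) and (ii) $\Rightarrow$ (iii)'' and ``(ii) and (iii) $\Rightarrow$ (i)'' combine to give exactly the asserted equivalence (i) $\Leftrightarrow$ (iii). So the whole proof reduces to verifying condition~(ii).

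To verify (ii), the key fact I would use is the standard identity that the infinitesimal generator of the tangent-lifted action $\Psi^{TQ}$ corresponding to $\xi\in\mathfrak g$ is precisely the tangent lift $\hat Y_\xi$ of the infinitesimal generator $Y_\xi$ of $\Psi$. Granting this, I differentiate the invariance relation $L\circ\Psi^{TQ}_{\exp(t\xi)}=L$ at $t=0$, obtaining
\[
  \hat Y_\xi(L)
  =\frac{d}{dt}\Big(L\circ\Psi^{TQ}_{\exp(t\xi)}\Big)\Big|_{t=0}
  =0
\]
on all of $TQ$; restricting to $M$ then gives condition~(ii). Note that here (ii) holds identically on $TQ$, not merely along motions, which is what makes the equivalence genuinely two-sided.

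The one computation worth carrying out is the identity quoted above. In bundle coordinates, using $\Psi^{TQ}_g(q,\dot q)=\big(\Psi_g(q),\Psi'_g(q)\dot q\big)$ from the text, I would differentiate along $g=\exp(t\xi)$ at $t=0$: the base component is $Y_\xi(q)$ and the fibre component in $\partial_{\dot q_i}$ is $\sum_j\der{(Y_\xi)_i}{q_j}\dot q_j$, which matches term-by-term the coordinate expression of the tangent lift recalled just before Proposition~\ref{prop5}. This bookkeeping is the only mild obstacle; everything else is a direct appeal to Proposition~\ref{prop5}, with no further regularity assumptions needed beyond those standing throughout the paper.
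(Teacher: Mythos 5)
Your proposal is correct and follows exactly the route the paper intends: the Corollary is stated as an immediate consequence of Proposition~\ref{prop5}, obtained by noting that invariance of $L$ under $\Psi^{TQ}$ forces $\hat Y_\xi(L)=0$ on all of $TQ$ (since the infinitesimal generator of the lifted action is the tangent lift of $Y_\xi$), so that condition~(ii) holds identically and (i)~$\Leftrightarrow$~(iii) follows. The coordinate verification you carry out of the identity between the lifted generator and the tangent lift is the same bookkeeping implicit in the paper's definitions.
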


\section{Moving energies } 
\label{S:ME}

\subsection{Definition and conservation. } In all of this section,
$(L,Q,\cM=Z+\cD)$ is a nonholonomic system with affine
constraints and we freely use the notation and the terminology
introduced in the previous section. 
For any vector field $Y$ on $Q$ define
\[
  \Emov LY := E_L - \langle p,Y\rangle 
\]
(in coordinates, $\Emov LY=E_L- p\cdot Y = p\cdot(\dot q-Y) - L$ with
$p=\der L{\dot q}$).

\begin{Definition} \label{Def1} 
A function $f:M\longrightarrow \mathbb{R}$ is called a 
{\rm moving energy} of $(L,Q,\cM)$ if there exists a
vector field $Y$ on $Q$, called a {\rm generator of $f$}, 
such that $f$ equals the restriction of $\Emov LY$ to the constraint
manifold,
$$
   f = \Emov LY|_M \,.
$$
\end{Definition}

We note that, because of the restriction to the constraint
manifold, the generator of a moving energy is never unique. Different
vector fields may lead to the same moving energy (see
Proposition~\ref{p:2MovEn=} below).

As we have mentioned in the Introduction, the notion of moving energy
given here is an extension of that originally given in \cite{FS2016},
which has a kinematical interpretation. A comparison between the two
is done in section \ref{s:MovEn-e-CoordChanges} below. 

Obviously, the consideration of moving energies has interest only when
the energy is not conserved, namely if $Z$ is not a section of
$\cR^\circ$. The central question, then, is which vector fields $Y$ produce
conserved moving energies for a given nonholonomic system $(L,Q,\cM)$.
The situation is very similar to that of which vector fields
produce conserved momenta, see Proposition \ref{prop5}:

\begin{Proposition}\label{thm1}
Any two of the following three conditions imply the
third:
\bList
\item[i.] $Y-Z\in\G(\cR^\circ)$. 
\item[ii.] $\tg Y(L)=0$ in $M$.
\item[iii.] $\Emov LY|_M$ is a first integral of $(L,Q,\cM=Z+\cD)$.
\eList
\end{Proposition}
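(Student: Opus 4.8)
The plan is to mimic exactly the proof of Proposition~\ref{prop5}, replacing the momentum $J_Y$ by the moving energy $\Emov LY$. The key is to compute the time derivative of $\Emov LY = E_L - J_Y$ along the flow of the equations of motion \for{eq:LagrEq}, restricted to $M$, and to show that it equals the sum of two terms: one built from the reaction force and one built from the action of $\tg Y$ on $L$, in such a way that the vanishing of any two of the three relevant quantities forces the vanishing of the third.

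First I would note that $\frac{d\Emov LY}{dt} = \frac{dE_L}{dt} - \frac{dJ_Y}{dt}$, and that \for{ddtJ} already gives $\frac{dJ_Y}{dt} = \tg Y(L) + R\cdot Y$ on $M$. So the main new ingredient is an analogous formula for $\frac{dE_L}{dt}$ along the constrained dynamics. The standard computation for the energy of a Lagrangian system gives $\frac{dE_L}{dt} = -\der L{t} + R\cdot\dot q$ when there is an external force $R$; since $L$ is time-independent here, this reduces to $\frac{dE_L}{dt} = R\cdot\dot q$ on $M$. (Concretely, $\frac{dE_L}{dt} = \sum_i\frac{dp_i}{dt}\dot q_i + \sum_i p_i\ddot q_i - \frac{dL}{dt}$, and using Lagrange's equations with reaction $R$ the terms telescope to $R\cdot\dot q$.) On $M$ we have $\dot q = Z + w$ for some $w\in\cD_q$, so $R\cdot\dot q = R\cdot Z + R\cdot w$; but $R$ takes values in $\cD^\circ$, hence $R\cdot w = 0$, giving $\frac{dE_L}{dt} = R\cdot Z$ on $M$.

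Subtracting, I obtain on $M$
\beq{ddtEmov}
  \frac{d\Emov LY}{dt}
  = R\cdot Z - \tg Y(L) - R\cdot Y
  = - \tg Y(L) - R\cdot(Y - Z) \,.
\eeq
This is the identity that does all the work. At each point $q\in Q$, the three quantities whose pairwise vanishing I must relate are $\frac{d\Emov LY}{dt}$, $\tg Y(L)$, and $R\cdot(Y-Z)$, all understood as functions on the fiber $\cM_q$; by \for{ddtEmov} the vanishing on all of $\cM_q$ of any two of them forces the vanishing of the third. Finally I translate each condition into the form stated: the vanishing of $R\cdot(Y-Z)$ on all of $\cM_q$ is, by the defining property of the reaction-annihilator distribution recalled in section~2.2, exactly the statement that $Y-Z$ lies in the fiber $\cR^\circ_q$, i.e.\ condition~(i); the vanishing of $\tg Y(L)$ on $M$ is condition~(ii); and the vanishing of $\frac{d\Emov LY}{dt}$ on $M$ is the statement that $\Emov LY|_M$ is a first integral, condition~(iii).

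The step I expect to require the most care is the derivation of $\frac{dE_L}{dt} = R\cdot Z$ on $M$, because it relies on restricting correctly to the constraint manifold and on using $R\in\cD^\circ$ to kill the horizontal part of the velocity. The subtlety is that $\dot q$ itself is only constrained to lie in $\cM_q = Z(q)+\cD_q$, not in $\cD_q$, which is precisely why the energy is generally not conserved and why the term $R\cdot Z$ survives; isolating this term cleanly is what makes the whole argument parallel to, yet genuinely different from, the momentum computation of Proposition~\ref{prop5}.
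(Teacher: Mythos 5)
Your proof is correct and follows essentially the same route as the paper's: compute $\frac{dE_L}{dt}=R\cdot\dot q=R\cdot Z$ on $M$ using $\dot q-Z\in\cD$ and $R\in\cD^\circ$, subtract the formula \for{ddtJ} for $\frac{dJ_Y}{dt}$, and conclude by the same two-out-of-three vanishing argument used for Proposition~\ref{prop5}. (Your sign in the final identity is in fact the consistent one; the paper's displayed expression differs by an overall sign, which is immaterial to the argument.)
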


\begin{proof} 
We work in coordinates. All functions are
evaluated in $M$. We have
$$
  \frac{d}{dt}E_L  = 
  \frac{d} {dt}(p\cdot\dot q - L) = 
  \Big(\dot p -\der L q \Big) \cdot \dot q 
  + \Big(p - \der L {\dot q}\Big) \cdot \ddot q   =
  R\cdot \dot q 
  =
  R\cdot Z 
$$
given that $\dot q-Z \in \cD$ and $R$ annihilates $\cD$.
Therefore, by \for{ddtJ},
$$
  \frac {d\Emov LY}{dt}
  =
  R\cdot(Y-Z) + \tg Y (L) 
$$
and the proof goes as that of Proposition \ref{prop5}.
\end{proof}

Proposition \ref{thm1} does not characterize all vector fields that
generate conserved moving energies, but only those which satisfy
either one (and hence the other) of the two conditions i. and ii. 
It has some immediate consequences:

\vfill\eject

\begin{Corollary}\label{Cor:Sym} \ 
\bList
\item[i.] If $\tg Y(L)|_M=0$ then  $\Emov L{Y}|_M$ is a first integral of
$(L,Q,\cM=Z+\cD)$ if and only if $Y-Z\in\G(\cR^\circ)$. 

\item[ii.] If $\tg Z(L)|_M=0$ then
$$
   \Emov L{Z}|_M = E_L- \langle p,Z\rangle |_M
$$
is a first integral of $(L,Q,\cM=Z+\cD)$. 

\item[iii.] Assume that $L$ is invariant under the tangent lift
$\tg\Psi$ of an action $\Psi$ on $Q$, namely $L\circ\tg\Psi_g=L$ for
all $g\in G$. Then for any $\xi\in\Liealg$, $\Emov L{Y_\xi}|_M$ is a
first integral of $(L,Q,\cM=Z+\cD)$ if and only if $Y_\xi-Z
\in\G(\cR^\circ)$. 
\eList
\end{Corollary}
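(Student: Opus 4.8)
The plan is to obtain all three parts of Corollary~\ref{Cor:Sym} as direct specializations of Proposition~\ref{thm1}, whose three conditions (i) $Y-Z\in\G(\cR^\circ)$, (ii) $\tg Y(L)=0$ in $M$, and (iii) $\Emov LY|_M$ is a first integral, have the property that any two imply the third. The entire content of the corollary is a matter of choosing the right vector field $Y$ and reading off the appropriate implication; no new computation should be needed beyond what Proposition~\ref{thm1} already supplies.

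For part (i), I would simply take the hypothesis $\tg Y(L)|_M=0$ to be condition (ii) of Proposition~\ref{thm1}. With (ii) in hand, the Proposition tells us that (i) and (iii) are now equivalent (each, together with (ii), implies the other). This is exactly the asserted equivalence: $\Emov L{Y}|_M$ is a first integral if and only if $Y-Z\in\G(\cR^\circ)$. For part (ii), I would set $Y=Z$ in Proposition~\ref{thm1}. Then condition (i) becomes $Z-Z=0\in\G(\cR^\circ)$, which holds trivially since the zero vector field is a section of every distribution. The hypothesis $\tg Z(L)|_M=0$ is precisely condition (ii) with this choice. Having both (i) and (ii), Proposition~\ref{thm1} yields (iii), namely that $\Emov L{Z}|_M=E_L-\langle p,Z\rangle|_M$ is a first integral.

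For part (iii), I would take $Y=Y_\xi$, the infinitesimal generator associated with $\xi\in\Liealg$. The key observation is that invariance of $L$ under the tangent-lifted action $\tg\Psi$ forces $\tg{Y_\xi}(L)=0$ identically on $TQ$, hence a fortiori on $M$: indeed, differentiating $L\circ\tg\Psi_g=L$ at $g=\exp(t\xi)$, $t=0$, gives that the tangent lift of $Y_\xi$ annihilates $L$, since $\tg{Y_\xi}$ is exactly the infinitesimal generator of $\tg\Psi$ on $TQ$. (This uses only that the tangent lift of the infinitesimal generator of $\Psi$ coincides with the infinitesimal generator of $\tg\Psi$, which is standard and was implicitly recorded in the discussion preceding Corollary~\ref{prop4}.) Thus condition (ii) holds for $Y=Y_\xi$, and part (iii) follows from part (i), which we have already established, applied to $Y=Y_\xi$.

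I do not expect any serious obstacle here, as each part is a one-line deduction from Proposition~\ref{thm1}. The only point requiring a small amount of care is the identity $\tg{Y_\xi}(L)=0$ in part (iii): one must verify that differentiating the invariance condition in the group parameter produces precisely $\tg{Y_\xi}(L)$ rather than some other object, which amounts to recognizing that tangent lift commutes with taking infinitesimal generators. Once that compatibility is noted, the corollary is immediate.
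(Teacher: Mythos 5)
Your proposal is correct and follows exactly the route the paper intends: the paper states Corollary~\ref{Cor:Sym} without proof as an ``immediate consequence'' of Proposition~\ref{thm1}, and your three specializations (using the hypothesis as condition ii, taking $Y=Z$ so that condition i holds trivially, and noting that $\tg\Psi$-invariance of $L$ gives $\tg{Y_\xi}(L)=0$ on all of $TQ$) are precisely the intended deductions. No issues.
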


Statement ii. is a particular case of statement i., but we have made
it explicit because---as special as it may appear---it is precisely
the case of all the affine LR systems and of their generalizations
considered in section 4. Statement iii. formalizes the idea that,  in
presence of a symmetry group of the Lagrangian, the natural
candidates to generate conserved moving energies are the
infinitesimal generators of the group action that are sections of
$\cR^\circ$. 

\vskip2mm\noindent
{\it Remarks.} i. Since the fibers of $\cD$ are contained in those of
$\cR^\circ$, the condition in item {i.\hskip-0.5mm} of
Proposition~\ref{thm1} is independent of the arbitrariness in the
choice of the component along $\cD$ of the vector field $Z$. 

ii. Statement iii. of Corollary \ref{Cor:Sym} generalizes Theorem 2
of \cite{FS2016} in two respects: it drops the assumption of the
invariance of the distribution $\cD$ under the group action and
requires $Z-Y_\xi$ to be a section of $\cR^\circ$, not of the smaller
distribution $\cD$. As is clarified in section~\ref{s:MovEn-e-CoordChanges},
these hypotheses were present in \cite{FS2016} because they are
related to the possibility of interpreting the moving energy as the
energy in a different system of coordinates, which is the case
considered there.

\subsection{Nonuniqueness of moving energies and their generators.}

A system may have different conserved moving energies and, on
the other hand, different vector fields may produce the same moving
energy. The following Proposition is a direct consequence of the
definitions:

\begin{Proposition}\label{p:2MovEn} Consider a nonholonomic system
with affine constraints $(L,Q,\cM)$ which has a conserved moving energy
$\Emov L{Y_1}|_M$. Then, for any vector field $Y_2$ on $Q$, 
the moving energy $\Emov L{Y_2}|_M$ is conserved if and
only if $J_{Y_1-Y_2}|_M$ is a first integral of 
$(L,Q,\cM)$. 
\end{Proposition}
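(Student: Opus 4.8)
The plan is to exploit the linearity of the map $Y \mapsto \Emov LY$ in the generator, which reduces the claimed equivalence to a statement about the difference of the two moving energies. First I would observe from the definition that for any two vector fields $Y_1, Y_2$ on $Q$ we have, as functions on $TQ$,
\[
  \Emov L{Y_2} = E_L - \langle p, Y_2\rangle = \big(E_L - \langle p, Y_1\rangle\big) + \langle p, Y_1 - Y_2\rangle = \Emov L{Y_1} + J_{Y_1 - Y_2},
\]
using that $\langle p, \cdot\rangle$ is linear in its second slot and that $J_Y = \langle p, Y\rangle$ by definition. Restricting this identity to the constraint manifold $M$ gives
\[
  \Emov L{Y_2}\big|_M = \Emov L{Y_1}\big|_M + J_{Y_1 - Y_2}\big|_M.
\]

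Next I would differentiate along the flow of the equations of motion \for{eq:LagrEq}. Since the three functions in the displayed identity are all restrictions to $M$ and the dynamics preserves $M$, differentiation commutes with the restriction, so
\[
  \frac{d}{dt}\Big(\Emov L{Y_2}\big|_M\Big) = \frac{d}{dt}\Big(\Emov L{Y_1}\big|_M\Big) + \frac{d}{dt}\Big(J_{Y_1 - Y_2}\big|_M\Big).
\]
By hypothesis $\Emov L{Y_1}|_M$ is a conserved moving energy, so its time derivative vanishes identically along motions. Hence the time derivative of $\Emov L{Y_2}|_M$ equals that of $J_{Y_1-Y_2}|_M$, and in particular the former vanishes identically if and only if the latter does. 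That is precisely the assertion that $\Emov L{Y_2}|_M$ is a first integral if and only if $J_{Y_1-Y_2}|_M$ is a first integral of $(L,Q,\cM)$.

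This argument is essentially immediate once the additive decomposition is written down, so I do not anticipate a genuine obstacle; the only point requiring a little care is the claim that conservation of $\Emov L{Y_2}|_M$ is genuinely equivalent to conservation of $J_{Y_1-Y_2}|_M$ (rather than merely being implied by it). This follows because conservation means the time derivative is zero at every point of $M$, and the two time derivatives coincide pointwise on $M$ by the displayed identity, so one vanishes identically exactly when the other does. I would also note that the decomposition is independent of any choice in $Z$ or in the representation of the constraints, since it is purely an algebraic identity among functions on $TQ$ restricted to $M$, so no appeal to the structure of $\cR^\circ$ or to Proposition \ref{thm1} is needed for this particular result.
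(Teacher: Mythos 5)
Your proof is correct and is exactly the argument the paper has in mind: the paper states that this Proposition ``is a direct consequence of the definitions'' and omits the proof, the intended content being precisely your identity $\Emov L{Y_2}|_M = \Emov L{Y_1}|_M + J_{Y_1-Y_2}|_M$ together with the hypothesis that $\Emov L{Y_1}|_M$ is conserved.
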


We analyze the second question only in the special case of a
Lagrangian $L=T-V\circ\pi$ without terms that are linear in the velocities;
the general case can be easily worked out. We denote here by $\perp$ the
orthogonality with respect to the Riemannian metric defined by the
kinetic energy $T$ and by $\langle Z\rangle$ the distribution on $Q$
generated by $Z$. 

\begin{Proposition}\label{p:2MovEn=} Assume that the Lagrangian does not
contain gyrostatic terms. Let $Y_1$ and $Y_2$ be two vector
fields on $Q$. Then $\Emov L{Y_1}|_M = \Emov L{Y_2}|_M$
if and only if $Y_1-Y_2$ is a section of
$(\cD\oplus\langle Z\rangle)^{\perp}$.
\end{Proposition}

\begin{proof}
The equality $\Emov L{Y_1}|_M = \Emov L{Y_2}|_M$ is equivalent to the
condition $\langle p,Y_1-Y_2\rangle |_M=0$. Since $p=A\dot q$, this
is in turn equivalent to the condition that, at each point $q\in Q$,
$Y_1(q)-Y_2(q)$ is $T$-orthogonal to the fiber $\cM_q$, namely, to all
tangent vectors $Z(q)+u $ with $u\in \cD_q$. Since $0\in \cD_q$, this
is equivalent to the two conditions $Y_1(q)-Y_2(q)\perp Z(q)$ and
$Y_1(q)-Y_2(q)\perp\cD_q$. It follows that $\Emov L{Y_1}|_M = \Emov
L{Y_2}|_M$ if and only if, for all $q\in Q$, $(Y_1-Y_2)(q) \perp\cD_q
\oplus \langle Z\rangle_q$.
\end{proof}

\section{Kinematically interpretable moving energies. }
\label{s:MovEn-e-CoordChanges} 

We now compare the definition of moving energy given
above with the one originally given in \cite{FS2016}.  This  is not
strictly needed to apply the results of section~\ref{S:ME} to
specific nonholonomic systems, and the reader may safely decide to
skip to the treatment of examples in sections~\ref{S:MELR},
\ref{s:ConvexBody} and \ref{s:6} if he or she desires. The discussion
is however important to understand how Definition~\ref{Def1} for a
moving energy enlarges the class of these energy-type integrals
compared to the cases considered so far \cite{FS2016,BMB2015}.

\subsection{Kinematically interpretable and horizontal moving
energies. } \label{ss:4.1}
As already explained in the Introduction, the construction of moving
energies in \cite{FS2016} is as follows. One looks for a
time-dependent change of coordinates that transforms the given
nonholonomic system with affine constraints into a nonholonomic
system with {\it linear} constraints. If time-independent, the
transformed system has a conserved energy. The moving energy of ref.
\cite{FS2016} is the energy of the transformed system written in the
original coordinates.

Let us be more precise. Let $\cC$ be a time-dependent diffeomorphism
from a manifold $U$ onto $Q$, namely a smooth map $\cC:\reali\times
U\to Q$ such that, for each $t\in\reali$, the map $\cC_t:=\cC(t,\ ) : 
U\to Q$ is a diffeomorphism. As proven in \cite{FS2016} (Proposition
1; see also Proposition 4 of \cite{FS2015}), any  nonholonomic system
with affine constraints  $(L,Q,\cM=Z+\cD)$  on the configuration
space $Q$ pull-backs, under the tangent bundle lift of $\cC$, to a
nonholonomic system with affine constraints $(\tilde L,U,\tilde\cM)$
on the configuration manifold $U$. Specifically, in coordinates, the 
Lagrangian of the transformed system is
\beq{Ltilde}
  \tilde L (u,\dot u,t) 
  =
  L \big(\cC_t(u), \cC'_t(u)\dot u +\dot\cC_t(u)\big) 
\eeq
where $\cC_t'$ is the Jacobian matrix of $\cC_t$ and
$\dot\cC_t=\der{\cC_t} t$, and the transformed, time-dependent,
constraint distribution $\tilde\cM$ has fibers
\begin{eqnarray}
 \tilde\cM_{t,u}
 \nonumber\ugarr
 \cC_t'(u)^{-1} \big[ \cM_{t,\cC_t(u)} -\dot\cC_t(u) \big] 
 \\\ugarr
 \label{Mtilde}
 \cC_t'(u)^{-1}
 \big[\cD_{\cC_t(u)} +Z(\cC_t(u)) -\dot\cC_t(u) \big] \,.
\end{eqnarray}
Let now $E^*_{L,\cC}:\reali\times TQ\to\reali$ be the push-forward of
the energy $E_{\tilde L}:\reali\times TU\to\reali$ of the Lagrangian
$\tilde L$ under the tangent bundle lift of $\cC$. The restriction of
$E^*_{L,\cC}$ to $\reali\times M$ is the ``moving energy of $(L,Q,M)$
induced by $\cC$'' of reference \cite{FS2016}; in the sequel,
however, `moving energy' will always have the meaning of
Definition~\ref{Def1}. A simple computation gives
\beq{Estar}
  E^*_{L,\cC}
  =
  E_L
  - \langle p , \dot \cC_t\circ \cC_t^{-1} \rangle
  \,. 
\eeq
Without further hypotheses, this function may be time-dependent. 

\begin{Definition} \label{Def2} A moving energy  of a nonholonomic
system with affine constraints $(L,Q,\cM)$ is {\rm kinematically
interpretable} if it has a generator $Y$  and there exists a
time-dependent  diffeomorphism $\cC:\reali\times U\to Q$ which are
such that
\begin{equation}\label{Emov=Estar}
   \Emov LY=E^*_{L,\cC}
\end{equation} 
and, with the notation just introduced, 
\bList
\item[D1.] $\tilde\cM$ is a linear distribution.
\item[D2.] $\tilde\cM$ is time-independent. 
\item[D3.] $\tilde L$ is time-independent.
\eList
(Note that \for{Emov=Estar} implies that $E^*_{L,\cC}$ is
time-independent).
\end{Definition}

In formulating this definition we have taken into account the fact
that, because of the restriction to the constraint manifold $M$, the
generator of a moving energy is not unique (see Proposition 
\ref{p:2MovEn=} above). We also note that this
definition might be weakened by requiring only, instead of
(\ref{Emov=Estar}), that  the moving energy equals $E^*_{L,\cC}|_M$,
namely $\Emov LY|_M = E^*_{L,\cC}|_M$ for any generator $Y$. The
stronger requirement (\ref{Emov=Estar}) allow us to characterize
the moving energies which  are kinematically interpretable,
instead of giving only a sufficient condition for their
kinematical interpretability. Recall that $\tg Y$ denotes
the tangent lift of a vector field $Y$ on $Q$. 

\begin{Proposition}\label{p:MovEn-KI} A moving energy of a
nonhonolonomic system with affine constraints $(L,Q,\cM=Z+\cD)$ is
kinematically interpretable if and only if  it has a generator $Y$
that satisfies the following three conditions: 
\bList
\item[P1.] $Y-Z\in\G(\cD)$.
\item[P2.] $\cD$ is $Y$-invariant (namely, $\cD_{\P^Y_t(q)} =
(\P^Y_t)'(q)\cD_q$ for all $t,q$, where $\P^Y:\reali\times Q\to Q$ is the flow
of~$Y$). 
\item[P3.] $\tg {Y}(L)=0$.
\eList
\end{Proposition}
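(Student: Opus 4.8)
The plan is to reduce the three defining conditions D1--D3 of kinematical interpretability, one at a time, to the conditions P1--P3, after first pinning down the relation between a generator $Y$ and the time-dependent diffeomorphism $\cC$ that \for{Emov=Estar} forces. Comparing $\Emov LY=E_L-\langle p,Y\rangle$ with the expression \for{Estar}, the equality $\Emov LY=E^*_{L,\cC}$ reads $\langle p,Y\rangle=\langle p,\dot\cC_t\circ\cC_t^{-1}\rangle$ on $TQ$ for every $t$. Since $p=A(q)\dot q+b(q)$ with $A$ symmetric and nonsingular, the $\dot q$-dependent part forces the pointwise identity $Y=\dot\cC_t\circ\cC_t^{-1}$; in particular the right-hand side is time-independent. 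Writing this as $\frac{d}{dt}\cC_t(u)=Y(\cC_t(u))$ and invoking completeness of $Y$, I conclude $\cC_t=\P^Y_t\circ\cC_0$, so that $\cC$ is, up to the fixed diffeomorphism $\cC_0$, the flow of $Y$; conversely, for a given $Y$ the choice $U=Q$, $\cC_t=\P^Y_t$ gives $\dot\cC_t\circ\cC_t^{-1}=Y$ and hence \for{Emov=Estar}, and this is the $\cC$ I would use for the ``if'' direction. Thus it suffices to show that, under $\cC_t=\P^Y_t\circ\cC_0$ (equivalently $\dot\cC_t=Y\circ\cC_t$), the triples D1--D3 and P1--P3 are equivalent; I will establish the termwise equivalences D1$\Leftrightarrow$P1, D2$\Leftrightarrow$P2 (granting P1), and D3$\Leftrightarrow$P3, which covers both directions since each assumes the full triple.

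For D1 and D2 I would work directly with the fibers \for{Mtilde}, substituting $\dot\cC_t=Y\circ\cC_t$ and $\cC_t'=(\P^Y_t)'\,\cC_0'$. The fiber $\tilde\cM_{t,u}$ is then the image under the linear isomorphism $\cC_t'(u)^{-1}$ of the affine subspace $\cD_{\cC_t(u)}+(Z-Y)(\cC_t(u))$, which is a linear subspace if and only if $(Z-Y)(q)\in\cD_q$ for all $q$, i.e. P1. Granting D1/P1, the fiber coincides with its direction space $\cC_0'(u)^{-1}(\P^Y_t)'(q_0)^{-1}\cD_{\P^Y_t(q_0)}$ (with $q_0=\cC_0(u)$), and comparing with its value at $t=0$ shows that it is time-independent if and only if $\cD_{\P^Y_t(q_0)}=(\P^Y_t)'(q_0)\cD_{q_0}$ for all $t,q_0$, i.e. $\cD$ is $Y$-invariant (P2).

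The step I expect to need the most care is D3$\Leftrightarrow$P3, because the tangent bundle lift in \for{Ltilde} is \emph{not} the plain tangent map of $\cC_t$: it carries the affine correction $\dot\cC_t(u)$. The clean way around this is to observe that the lift $\Lambda_t:TU\to TQ$, $\Lambda_t(u,\dot u)=\big(\cC_t(u),\,\cC_t'(u)\dot u+\dot\cC_t(u)\big)$, factors as $\Lambda_t=T\P^Y_t\circ\Lambda_0$: the extra term $\dot\cC_t=Y\circ\cC_t$ is absorbed precisely because $Y$ is invariant under its own flow, $(\P^Y_t)'(q_0)\,Y(q_0)=Y(\P^Y_t(q_0))$. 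Since $T\P^Y_t$ is the flow of $\tg Y$, differentiating $\tilde L=L\circ\Lambda_t=(L\circ T\P^Y_t)\circ\Lambda_0$ in $t$ gives $\partial_t\tilde L=\big(\tg Y(L)\big)\circ\Lambda_t$; as $\Lambda_t$ is a diffeomorphism, $\tilde L$ is time-independent if and only if $\tg Y(L)=0$, which is P3. Assembling the three equivalences then yields the Proposition in both directions.
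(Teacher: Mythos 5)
Your proof is correct and follows essentially the same route as the paper: it pins down $\cC_t=\P^Y_t\circ\cC_0$ (the content of Lemma~\ref{l:Estar-tindep} together with \for{Estar2}) and then establishes the termwise equivalences D1$\Leftrightarrow$P1, D2$\Leftrightarrow$P2, D3$\Leftrightarrow$P3 as in Lemma~\ref{l:Mtilde_etc}. The only (welcome) streamlining is in the D2 step, where you note that P1 collapses the affine fiber to $\cC_t'(u)^{-1}\cD_{\cC_t(u)}$ and read off the $Y$-invariance of $\cD$ directly, bypassing the paper's intermediate characterization via $Y$-invariance of $\cM$ and the subsequent chain of identities relating the invariance of $\cM$ to that of $\cD$.
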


The proof of this Proposition is given in the next subsection. 

In view of Proposition \ref{thm1}, items P1 and P3 of Proposition 
\ref{p:MovEn-KI} imply that any kinematically interpretable moving
energy is a conserved quantity. However, the comparison with
Proposition~\ref{thm1} shows that the class of kinematically
interpretable moving energies is (in general) a subclass of that of
all conserved moving energies. One reason is that the kinematic
interpretability requires the vector field $Y-Z$ to be a section of
the distribution $\cD$ and not of the (generally) larger
distribution~$\cR^\circ$. 

\begin{Definition} \label{Def3} A moving energy of $(L,Q,\cM=\cD+Z)$
is said to be {\rm horizontal} if it has a generator $Y$ such that
$Y-Z\in\G(\cD)$.
\end{Definition}

Hence, a necessary condition for a moving energy to be
kinematically interpretable is that it is horizontal. All examples of
moving energies given in \cite{FS2016,BMB2015} and in the following
sections of this work are horizontal, and all, except for possibly 
a subclass  of the ones treated in section~\ref{S:MELR-1}, are kinematically
interpretable.
It would be interesting to find systems that do not have conserved
horizontal moving energies but do have conserved moving energies.

Horizontal moving energies have other reasons of interest.
Changing the Lagrangian changes $\cR^\circ$ and---as it happens for the
energy---may destroy or restore the conservation of a moving energy.
Since the distribution $\cD$ is
independent of the Lagrangian, horizontal moving energies 
are in this respect special. 
Specifically, Proposition \ref{thm1} has the following consequence:

\begin{Corollary}\label{Cor:Noether} If $Y-Z\in\Gamma(\cD)$,
then $Y$ generates a conserved moving energy for
any nonholonomic system $(L,Q,\cM=Z+\cD)$ whose Lagrangian
satisfies $\tg Y(L)|_M=0$.
\end{Corollary}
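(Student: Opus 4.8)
The plan is to read this off directly from Proposition~\ref{thm1}, exploiting the fiberwise inclusion $\cD_q\subseteq\cR^\circ_q$ recorded in Section~2.2. First I would match the two hypotheses of the Corollary against conditions i.\ and ii.\ of Proposition~\ref{thm1}. The assumption $\tg Y(L)|_M=0$ is literally condition ii. For condition i., I would invoke that the fibers of $\cD$ are contained in those of $\cR^\circ$: since $Y-Z$ is by hypothesis a section of $\cD$, and $\G(\cD)\subseteq\G(\cR^\circ)$ because $\cD_q\subseteq\cR^\circ_q$ at every $q$, the vector field $Y-Z$ is a fortiori a section of $\cR^\circ$, which is precisely condition i.

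With both conditions i.\ and ii.\ in hand, Proposition~\ref{thm1} immediately delivers the third condition, namely that $\Emov LY|_M$ is a first integral of $(L,Q,\cM=Z+\cD)$. Since the inclusion $\cD\subseteq\cR^\circ$ holds for every nonholonomic system on $(Q,\cD)$, and since the only further requirement is the invariance $\tg Y(L)|_M=0$, the conclusion is uniform over the whole class of Lagrangians satisfying that invariance, as claimed.

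There is essentially no obstacle here: the whole content sits in Proposition~\ref{thm1} together with the system-independent inclusion $\cD\subseteq\cR^\circ$. The single point worth flagging, and the reason the statement has genuine force, is that although $\cR^\circ$ itself depends on the Lagrangian through the reaction force $R$ (cf.\ \for{eq:RF2}), the sub-inclusion $\cD\subseteq\cR^\circ$ does \emph{not}. It is exactly this Lagrangian-independence of $\cD$ that lets the horizontality hypothesis $Y-Z\in\Gamma(\cD)$ certify condition i.\ of Proposition~\ref{thm1} simultaneously for all admissible Lagrangians, rather than only for one chosen $L$.
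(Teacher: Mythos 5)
Your proposal is correct and coincides with the paper's own (implicit) argument: the Corollary is presented there as a direct consequence of Proposition~\ref{thm1} via the system-independent inclusion $\cD_q\subseteq\cR^\circ_q$, which is exactly how you certify condition~i.\ while the invariance hypothesis supplies condition~ii. Your closing observation that the Lagrangian-independence of $\cD$ is what makes the conclusion uniform over all admissible Lagrangians is precisely the ``Noetherian'' point the paper emphasizes after the statement.
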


In particular, if we have an action $\Psi$ of a Lie group $G$ on $Q$
and, for some $\xi\in\mathfrak g$, the infinitesimal generator
$Y_\xi$ is such that $Y_\xi-Z\in\Gamma(\cD)$, then $Y_\xi$
generates a conserved moving energy for all nonholonomic systems
$(L,Q,\cM=Z+\cD)$ with $\tg\Psi$-invariant Lagrangian $L$. This may
be viewed as a `Noetherian' property (in the sense of
\cite{Ortega-Ratiu,FGS2009,FGS2012}) of horizontal moving energies.

An instance of this property is encountered in the
affine Suslov problem. If the axis of forbidden rotations is also an
axis of symmetry of the body,  the system is invariant under
rotations of the body frame about this axis.  Associated to this
$S^1$-symmetry there is a preserved moving energy  that persists in
the presence of invariant potentials (see \cite{BMB2015}).

\subsection{Proof of Proposition \ref{p:MovEn-KI}.} \label{ss:4.2}
The proof rests on a few results that are refinements of results from
\cite{FS2016} (and, in one case, correct a minor error of \cite{FS2016}).

\begin{Lemma}\label{l:Estar-tindep} 
Let $\cC:\reali\times U\to Q$ be a time-dependent diffeomorphism. Then
$E^*_{L,\cC}$ is time-independent if and only if $\cC$ is, up to a
diffeomorphism $\cC_0:U\to Q$, the flow $\P^Y$ of a vector field $Y$
on $Q$,\footnote{In \cite{FS2016}, Proposition 3, it is
erroneously stated that $\cC$ is a flow, without contemplating the
possibility of the presence of a diffeomorphism $\cC_0$. We correct
this here. This error does not impair the other results of
\cite{FS2016}.} namely
$$
  \cC_t = \P^Y_t\circ \cC_0 \qquad \forall t\in\reali \,.
$$
\end{Lemma}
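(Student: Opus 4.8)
The plan is to reduce everything to the time-dependent vector field generated by $\cC$ and to exploit the explicit formula \for{Estar}. First I would set $W_t := \dot\cC_t\circ\cC_t^{-1}$, a (possibly $t$-dependent) vector field on $Q$, so that \for{Estar} reads $E^*_{L,\cC} = E_L - \langle p, W_t\rangle$. Since $E_L$ is a genuine function on $TQ$ that does not depend on $t$, the time-independence of $E^*_{L,\cC}$ is equivalent to that of the function $\langle p, W_t\rangle$ on $TQ$.

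Next I would show that $\langle p, W_t\rangle$ is time-independent if and only if the vector field $W_t$ itself is. This is the step that requires care, and is where the mechanical form \for{hatL} of $L$ enters: for each fixed $q$ the Legendre map $\dot q\mapsto p = A(q)\dot q + b(q)$ is an affine isomorphism of $T_qQ$ onto $T^*_qQ$ (here nonsingularity of $A$ is essential), so as $\dot q$ ranges over $T_qQ$ the covector $p$ sweeps out the whole fibre $T^*_qQ$. As $p$ does not depend on $t$, differentiating $\langle p, W_t\rangle$ in $t$ gives $\langle p, \partial_t W_t(q)\rangle$, and requiring this to vanish for every $p\in T^*_qQ$ forces $\partial_t W_t(q)=0$. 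Hence $E^*_{L,\cC}$ is time-independent precisely when $W_t\equiv Y$ for some $t$-independent vector field $Y$ on $Q$.

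Finally I would translate the condition $W_t\equiv Y$ into the asserted form of $\cC$ by means of existence and uniqueness for ordinary differential equations. By construction of $W_t$, for each $u\in U$ the curve $t\mapsto\cC_t(u)$ satisfies $\frac{d}{dt}\cC_t(u)=W_t(\cC_t(u))$ with initial value $\cC_0(u)$, where $\cC_0:=\cC(0,\cdot\,)$ is a diffeomorphism of $U$ onto $Q$. If $W_t\equiv Y$, this is the autonomous equation $\dot x = Y(x)$ with $x(0)=\cC_0(u)$, whose unique solution is $\P^Y_t(\cC_0(u))$; thus $\cC_t=\P^Y_t\circ\cC_0$. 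Conversely, if $\cC_t=\P^Y_t\circ\cC_0$, differentiating in $t$ gives $\dot\cC_t = Y\circ\cC_t$, so $W_t=Y$ is time-independent and, by the previous paragraphs, so is $E^*_{L,\cC}$. The diffeomorphism $\cC_0$ appears here exactly as the initial condition of the flow, which is precisely the ingredient missing from the original statement in \cite{FS2016} and recorded in the footnote. The only genuinely delicate point is the passage from the scalar identity to the identity of vector fields in the second paragraph, which hinges on the nondegeneracy of the kinetic metric.
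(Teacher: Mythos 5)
Your argument is correct and follows essentially the same route as the paper's proof: both reduce the question via \for{Estar} to the time-independence of the field $\dot\cC_t\circ\cC_t^{-1}$, use the invertibility of the fibrewise Legendre map to pass from the vanishing of the pairing $\langle p,\partial_t W_t\rangle$ for all velocities to $\partial_t W_t=0$, and then recover $\cC_t=\P^Y_t\circ\cC_0$ from $\dot\cC_t=Y\circ\cC_t$ (the paper does this by directly verifying that $\P_t:=\cC_t\circ\cC_0^{-1}$ is the flow of $Y$, which is your ODE-uniqueness step in disguise). No gaps.
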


\begin{proof} We may work in coordinates. 
Let $Y(q,t):=\dot\cC_t\circ\cC^{-1}_t \in T_qQ$. From
\for{Estar} and from the time-independency of $E_L$ and of $p=\der
L{\dot q}$ it follows that $E^*_{L,\cC}$ is time-independent if and
only if  $\langle p(q,\dot q),\der Y t(q,t)\rangle=0$ for all $q,\dot
q,t$. Since, by the assumptions made on the Lagrangian \for{Lagr},
$\dot q\mapsto p(q,\dot q)$ is, for each $q$, a linear invertible
map $T_qQ\to T^*_qQ$, this is equivalent to $\der Yt=0$. Hence
$E^*_{L,\cC}$ is time-independent if and only if there is a vector
field $Y$ on $Q$ such that $\dot\cC_t=Y\circ\cC_t$ for all~$t$.
Define a map $\P:\reali\times Q\to Q$ through
$\P_t:=\cC_t\circ\cC^{-1}_0$ $\forall t$. Since $\P_0=\mathrm{id}$
and  $\der {\P_t}t = \dot\cC_t\circ\cC^{-1}_0 = Y\circ\P_t$ $\forall
t$, $\P$ is the flow of $Y$. \end{proof}

If $\cC_t=\P^Y_t\circ\cC_0$, then \for{Estar} gives
\beq{Estar2}
    E^*_{L,\cC} = E_L - \langle p,Y \rangle \,.
\eeq

\begin{Lemma}\label{l:Mtilde_etc} Let $\cC:\reali\times U\to Q$ be a
time-dependent diffeomorphism. Assume $\cC_t=\P^Y_t\circ\cC_0$ for
all~$t$. Then:
\bList
\item[L1.] $\tilde\cM$ is a linear distribution if and only if
$Y-Z\in\G(\cD)$.
\item[L2.] $\tilde\cM$ is time-independent if and only if $\cM$ is
$Y$-invariant (namely, $(\P^Y_t)'(q)\cM_q=\cM_{\P^Y_t(q)}$ $\forall t,q$).
\item[L3.] $\tilde L$ is time-independent if and only if $Y^{TQ}(L)=0$.
\eList
\end{Lemma}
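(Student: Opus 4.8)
The plan is to reduce all three equivalences to the single structural identity that follows from $\cC_t=\P^Y_t\circ\cC_0$: differentiating this flow relation in $t$ gives
\[
  \dot\cC_t(u)=Y\big(\cC_t(u)\big)\,,\qquad\forall\,t,u\,.
\]
This turns every occurrence of $\dot\cC_t$ in \for{Mtilde} and \for{Ltilde} into $Y\circ\cC_t$, and it is the linchpin of the argument. I would use repeatedly two further elementary facts: that $Y$ is invariant under its own flow, $(\P^Y_t)'(q)\,Y(q)=Y(\P^Y_t(q))$, and the chain rule $\cC_t'(u)=(\P^Y_t)'(\cC_0(u))\,\cC_0'(u)$, which together let me transport fibre data along the orbits of $Y$. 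Throughout, I would also use that each $\cC_t$ is a diffeomorphism onto $Q$, so that $q=\cC_t(u)$ sweeps out all of $Q$ as $t,u$ vary.

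For L1 I would substitute $\dot\cC_t=Y\circ\cC_t$ into \for{Mtilde}, rewriting the fibre as
\[
  \tilde\cM_{t,u}=\cC_t'(u)^{-1}\big[\cD_{\cC_t(u)}+(Z-Y)(\cC_t(u))\big]\,.
\]
Since $\cC_t'(u)^{-1}$ is a linear isomorphism, this fibre is a linear subspace precisely when the affine subspace $\cD_q+(Z-Y)(q)$ of $T_qQ$ is, and an affine translate of $\cD_q$ is linear exactly when the translating vector lies in $\cD_q$. Hence linearity of $\tilde\cM$ for all $t,u$ is equivalent to $(Z-Y)(q)\in\cD_q$ for all $q\in Q$, i.e. to $Y-Z\in\G(\cD)$.

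For L2 I would fix $u$, set $q_0=\cC_0(u)$, and use the chain rule to write
\[
  \tilde\cM_{t,u}=\cC_0'(u)^{-1}\,(\P^Y_t)'(q_0)^{-1}\big[\cM_{\P^Y_t(q_0)}-Y(\P^Y_t(q_0))\big]\,.
\]
The prefactor $\cC_0'(u)^{-1}$ does not depend on $t$, so time-independence of $\tilde\cM_{t,u}$ is equivalent to the bracketed affine subspace, pulled back by $(\P^Y_t)'(q_0)^{-1}$, agreeing with its value at $t=0$, namely $\cM_{q_0}-Y(q_0)$. Applying $(\P^Y_t)'(q_0)$ to both sides and invoking the self-invariance $(\P^Y_t)'(q_0)Y(q_0)=Y(\P^Y_t(q_0))$, the two $Y$-terms cancel and one is left with $\cM_{\P^Y_t(q_0)}=(\P^Y_t)'(q_0)\,\cM_{q_0}$ for all $t$. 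As $u$ varies $q_0$ covers $Q$, so this is exactly the $Y$-invariance of $\cM$.

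The bulk of the work is L3. I would differentiate \for{Ltilde} in $t$ by the chain rule, which requires $\der{}{t}\cC_t'(u)=(\dot\cC_t)'(u)$ and the second derivative $\ddot\cC_t(u)$. Feeding in $\dot\cC_t=Y\circ\cC_t$ gives $(\dot\cC_t)'(u)=Y'(\cC_t(u))\,\cC_t'(u)$ and $\ddot\cC_t(u)=Y'(\cC_t(u))\,Y(\cC_t(u))$; the two velocity contributions then assemble into a single term $Y'(\cC_t(u))\,v$, with $v=\cC_t'(u)\dot u+\dot\cC_t(u)$ the velocity argument of $L$. The result is the identity
\[
  \der{\tilde L}{t}(u,\dot u,t)=\tg Y(L)\big(\cC_t(u),v\big)\,,
\]
whose right-hand side is precisely the coordinate expression of the tangent lift $\tg Y$ (displayed just before Proposition~\ref{prop5}) applied to $L$. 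For each $t$ the map $(u,\dot u)\mapsto(\cC_t(u),v)$ is a diffeomorphism of $TU$ onto $TQ$, so the left-hand side vanishes identically iff $\tg Y(L)=0$ on $TQ$, giving L3. The delicate point I would watch is exactly this computation: correctly identifying $\der{}{t}\cC_t'$ with $(\dot\cC_t)'$ and evaluating $\ddot\cC_t$, and then recognizing the recombined terms as $\tg Y(L)$ rather than as something that merely resembles it.
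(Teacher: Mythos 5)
Your proof is correct, and for L1 and L2 it follows essentially the same path as the paper: substitute $\dot\cC_t=Y\circ\cC_t$ into \for{Mtilde}, observe that an affine translate of $\cD_q$ is a linear subspace iff the translating vector lies in $\cD_q$, and for L2 factor $\cC_t'(u)=(\P^Y_t)'(\cC_0(u))\,\cC_0'(u)$ and use the self-invariance $(\P^Y_t)'(q)Y(q)=Y(\P^Y_t(q))$ to cancel the inhomogeneous terms and isolate the condition $(\P^Y_t)'(q)\cM_q=\cM_{\P^Y_t(q)}$. The only real divergence is L3. The paper rewrites $\tilde L(u,\dot u,t)=L\big(\P^Y_t(q),(\P^Y_t)'(q)\dot q\big)$ with $(q,\dot q)$ ranging over a $t$-independent copy of $TQ$, so that time-independence of $\tilde L$ is literally the invariance of $L$ under the lifted flow, which is then identified with $\tg Y(L)=0$. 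You instead differentiate \for{Ltilde} in $t$ and check that the chain-rule terms recombine into $\tg Y(L)$ evaluated at the image point; your computation is right --- the key identity $\partial_t v=Y'(\cC_t(u))\,v$ for $v=\cC_t'(u)\dot u+\dot\cC_t(u)$ is exactly what makes the two velocity contributions assemble into the fibre part of the tangent lift --- and the conclusion follows from the surjectivity of the lifted map for fixed $t$. Your route has the minor advantage of reaching the infinitesimal statement $\tg Y(L)=0$ directly, whereas the paper's computation-free version implicitly invokes the standard equivalence between invariance under a (complete) flow and vanishing of the Lie derivative. Both arguments are sound.
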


\begin{proof} (L1) Expression \for{Mtilde} shows that $\tilde
\cM_{t,u}$ is a linear subspace of $T_uU$ if and only if, at each $t$
and $u$, the vector $Z(\cC_t(u))-\dot\cC_t(u) =  (Z-Y)(\cC_t(u))$
belongs to $\cD_{\cC_t(u)}$. 

(L2) Write $\P$ for $\P^Y$. First note that if $\cC_t=\P_t\circ\cC_0$
then $\cC_t'=(\P'_t\circ\cC_0)\cC_0'$ and
$\dot\cC_t=Y\circ\P_t\circ\cC_0=(\P'_t Y)\circ\cC_0$, where the last
equality uses the invariance of a vector field under its own flow.
Therefore, \for{Mtilde} gives
$$
  \tilde\cM_{t,u} =
  \cC'_0(u)^{-1} \P'_t(\cC_0(u))^{-1}
  \big[ \cD_{\P_t(\cC_0(u))} + Z(\P_t(\cC_0(u)))
         - \P_t'(\cC_0(u) Y(\cC_0(u))  \big]
  \qquad \forall u,t\,.
$$
Thus, using the fact that $\cC_0$ is a diffeomorphism and
$\P_0=\mathrm{id}$, the condition  of time-independence
$\tilde\cM_{t,u}=\tilde \cM_{0,u}$ $\forall t,u$ is
$$
  \P'_t(q)^{-1}\big[ \cD_{\P_t(q)} + Z(\P_t(q)) \big]
  =  \cD_{q} + Z(q)  \,,
$$
namely, $\P'_t(q)^{-1}\cM_{\P_t(q)} =\cM_q$, for all $t,q$.

(L3) Proceeding as in the proof of (L2) we see that \for{Ltilde} gives
$$
 \tilde L(u,\dot u,t)
 =
 L\Big(\P_t(\cC_0(u)),
       \P'_t(\cC_0(u))\big[ \cC'_0(u)\dot u + Y(\cC_0(u))\big]\Big)
 \quad \forall u,\dot u, t \,.
$$
It follows that $\tilde L$ is time-independent if and only if
$$
 L\Big(\P_t(\cC_0(u)),
       \P'_t(\cC_0(u))\big[ \cC'_0(u)\dot u + Y(\cC_0(u))\big]\Big)
 =
 L\big(\cC_0(u), \cC'_0(u)\dot u + Y(\cC_0(u)) \big)
 \quad \forall u,\dot u, t \,.
$$
Given that $\cC_0:U\to Q$ is a diffeomorphism and so is, for each $u$,
the map $T_uU\ni \dot u\mapsto \cC'_0(u)\dot u + Y(\cC_0(u)) \in
T_qQ$, the latter condition is equivalent to
$$
   L(\P_t(q),\P_t'(q)\dot q) = L(q,\dot q) \qquad \forall q,\dot q , t 
$$
namely, to the $\tg Y$-invariance of $L$. \end{proof}

We may now prove Proposition \ref{p:MovEn-KI}. Consider first a moving
energy $\Emov LY|_M$ of a nonholonomic system $(L,Q,\cM=Z+\cD)$ whose
generator $Y$ satisfies the three conditions P1, P2 and P3
of Proposition~\ref{p:MovEn-KI}. Let $\cC=\P^Y$,
the flow of $Y$.
By \for{Estar2}, $\Emov LY = E^*_{L,\cC}$. Therefore, on account
of item L1 of Lemma~\ref{l:Mtilde_etc}, P1 implies that 
$\cC$ satisfies D1 and, on account
of item L3 of that Lemma, P3 implies that $\cC$ satisfies D3. 
To show that $\Emov LY$ is kinematically interpretable it remains
to show that $\cC$ satisfies D2. On account of L2, this is equivalent
to the $Y$-invariance of $\cM$. By itself P2 gives only the $Y$-invariance
of $\cD$ but, as we now show, together with P1 this implies the
$Y$-invariance of $\cM$. Let us write $\P$ for $\P^Y$.
Since $\P'_tY=Y\circ\P_t$ for all $t$, and since $\cD+Z-Y=\cD$ given that
$Z-Y$ is a section of $\cD$, we have 
\begin{eqnarray*}
  \P'_t(\cM_{q})
  \ugarr
  \P'_t\big( \cD_{q}+Z(q) \big)
  \\\ugarr
  \P'_t\big( \cD_{q}+Z(q)-Y(q) \big) + Y(\P_t(q))
  \\\ugarr
  \P'_t(\cD_{q}) + Y(\P_t(q))
  \\\ugarr
  \cD_{\P_t(q)} + Y(\P_t(q))
  \\\ugarr
  \cD_{\P_t(q)} + (Y-Z)(\P_t(q)) + Z(\P_t(q))
  \\\ugarr
  \cD_{\P_t(q)} +Z\circ\P_t(q)
  \\\ugarr
  \cM_{\P_t(q)} 
\end{eqnarray*}
for all $q$ and $t$. This proves that $\Emov LY$ is kinematically
interpretable.

Conversely, assume that $\Emov LY|_M$ is kinematically intrepretable.
Then there exist an $n$-dimensional manifold $U$ and a time-dependent
diffeomorphism $\cC:\reali\times U\to Q$ such that $\Emov
LY=E^*_{L,\cC}$ and $\cC$ satisfies properties D1, D2 and D3.
Therefore, by L1 and L3, $Y$ satisfies P1
and P3. Thus $Y-Z$ is a section of $\cD$ and an argument similar to
the one just used
shows that D2, namely the time-independence of $\tilde\cM$, implies
that not only $\cM$ (as in L2) but also $\cD$ are
$Y$-independent. Hence $Y$ satisfies P2 as well.

\section{Moving energies for LR systems}
\label{S:MELR}

\subsection{A moving energy for a class of affine nonholonomic systems
on Lie groups. } \label{S:MELR-1}
As a first example, we consider here a class of nonholonomic systems
$(L,Q,\cM)$ with affine constraints whose configuration manifold $Q$ is a Lie group
$G$. This class includes the so-called (affine) LR systems, that we
will consider in the next subsection. 

As usual, we denote by
$\Left$ and $\Right$, respectively, the actions of $G$ on itself by
left and right translations.\footnote{The symbols $L$ and $R$ are also
used for other objects, but there will be no risk of confusion from
the context.} A function $f:TG\to \reali$ is {left-invariant} if it is
invariant under the lifted action $\tgG \Left$ on $TG$, namely $f\circ
\tgG \Left_g=f$ for all $g\in G$. A vector field $Y$ on $G$ is
right-invariant if $(\Right_g)_*Y=Y$ for all $g\in G$. 

Corollary \ref{Cor:Sym} has the following immediate consequence (which
we formulate in a way that takes into account the fact that the
vector field $Z$ that determines the inhomogeneous part of the affine
constraint distribution is nonunique):

\begin{Proposition}\label{p:MovEn4AlmostLRSystems}
Consider a nonholonomic system with affine constraints $(L,G,\cM)$,
where $G$ is a Lie group. Assume that the
Lagrangian $L$ is {left-invariant} and that the affine distribution
$\cM$ can be written as $\cM=Z+\cD$ with a {right-invariant} vector
field $Z$. Then, the (horizontal) moving energy
$$
   \Emov L Z|_M = E_L-\langle p,Z\rangle \big|_M
$$
is a first integral of $(L,G,\cM)$.
\end{Proposition}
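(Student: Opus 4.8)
The plan is to deduce Proposition~\ref{p:MovEn4AlmostLRSystems} directly from item~ii of Corollary~\ref{Cor:Sym}. That corollary states that if $\tg Z(L)|_M = 0$, then $\Emov LZ|_M = E_L - \langle p, Z\rangle|_M$ is a first integral of $(L,G,\cM = Z+\cD)$. So the entire burden of the proof is to verify the single hypothesis $\tg Z(L)|_M = 0$ --- in fact I expect to prove the stronger statement $\tg Z(L) = 0$ everywhere on $TG$, not merely on $M$, which will be cleaner.

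First I would unwind the two invariance hypotheses. The Lagrangian $L$ is left-invariant, meaning $L \circ \tgG\Left_g = L$ for all $g\in G$, where $\tgG\Left$ is the tangent lift of left translation. The vector field $Z$ is right-invariant, $(\Right_g)_* Z = Z$. The key observation is that a right-invariant vector field is precisely the infinitesimal generator of the \emph{left} translation action: if $\xi = Z(e) \in \Liealg$ is the value of $Z$ at the identity, then $Z$ is the infinitesimal generator $Y_\xi$ associated to the left-translation action $\Left$ on $G$, because the flow of a right-invariant vector field is given by left translations $\Phi^Z_t = \Left_{\exp(t\xi)}$. Hence the flow of $Z$ is exactly the one-parameter subgroup of the left action under which $L$ is assumed invariant.

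The main step, then, is to connect left-invariance of $L$ under the full action to the vanishing of $\tg Z(L)$. Since $Z$ generates the left-translation flow $\Left_{\exp(t\xi)}$ on $G$, its tangent lift $\tg Z$ generates the lifted flow $\tgG\Left_{\exp(t\xi)}$ on $TG$. Differentiating the invariance identity $L \circ \tgG\Left_{\exp(t\xi)} = L$ with respect to $t$ at $t=0$ gives exactly $\tg Z(L) = 0$ on all of $TG$. This uses the standard fact, recalled in the excerpt just before Corollary~\ref{prop4}, that the tangent lift of an action has infinitesimal generators equal to the tangent lifts of the infinitesimal generators of the action on the base. With $\tg Z(L)=0$ in hand (and hence a fortiori on $M$), item~ii of Corollary~\ref{Cor:Sym} applies and yields that $\Emov LZ|_M$ is a first integral.

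I do not expect any genuine obstacle here; the result is essentially a specialization of Corollary~\ref{Cor:Sym}\,ii. The only point requiring a little care --- and the place where one must be precise --- is the identification of a right-invariant vector field with the infinitesimal generator of the left action, so that the \emph{left}-invariance of $L$ is exactly matched to the flow of the \emph{right}-invariant field $Z$. One must not accidentally pair left-invariance of $L$ with left-invariant (rather than right-invariant) $Z$, which would generate right translations and break the argument. Finally, the word ``horizontal'' in parentheses in the statement is justified trivially: since the generator $Y = Z$ satisfies $Y - Z = 0 \in \G(\cD)$, the moving energy is horizontal by Definition~\ref{Def3}, and the remark about the nonuniqueness of $Z$ means the conclusion is independent of the chosen horizontal representative of the affine constraint.
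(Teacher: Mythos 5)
Your proof is correct and follows essentially the same route as the paper: the authors likewise observe that the flow of the right-invariant field $Z$ consists of left translations, so left-invariance of $L$ gives $\tgG Z(L)=0$ on all of $TG$, and then invoke item ii of Corollary~\ref{Cor:Sym} (or Corollary~\ref{Cor:Noether}, since $Z-Z=0\in\G(\cD)$). Your expanded justification of the identification of $Z$ with an infinitesimal generator of the left action, and of the horizontality, is exactly the content the paper compresses into its two-line argument.
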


\begin{proof} Since the flow of a right-invariant vector field
consists of left-translations, under the stated hypotheses we have
$\tgG Z(L)=0$ in all of $TG$. Hence, the conclusion follows either from
item ii. of Corollary~\ref{Cor:Sym} or from
Corollary~\ref{Cor:Noether}, given that $Z-Z=0 \in\G(\cD)$. 
\end{proof}

If the Lagrangian has the form \for{hatL}, then the condition of
left-invariance implies that the kinetic energy $T$ is a
left-invariant Riemannian metric on $G$, that the gyrostatic term $b$
is a left-invariant 1-form on $G$, and that the potential energy $V$ is
a constant. We give here the expression of $\Emov LZ$ in the case in
which the Lagrangian does not contain gyrostatic terms, so that $L=T$. 

We employ the left-trivialization of $TG$, namely the identification
$\Lambda:TG\to G\times\Liealg$ given by the maps
$T_g\Left_{g^{-1}}:T_gG\to T_eG\equiv\Liealg$. We write $\O$ for
$T_g\Left_{g^{-1}}\dot g$ (the `body angular velocity'; in a matrix
group, $\O=g^{-1}\dot g$). We denote $\gpairing{\ }{\ }$ the
$\Liealg^*$-$\Liealg$ pairing. On account of the left-invariance of
the Lagrangian $L=T$, its left-trivialization $T\circ\Lambda^{-1}$,
which we keep denoting $T$, is given by
$$
  T(g,\O) = \frac12 \gpairing{\I\O}{\O} 
$$
where $\I:\Liealg\to\Liealg^*$ is the positive definite symmetric
tensor determined by the kinetic energy at the group identity $e$.
Furthermore, the left-trivialization of the Legendre transformation
is the map $\Liealg\to\Liealg^*$ given by $\O\mapsto\I\O$. Given that the
vector field $Z$ is right-invariant, there exists a Lie algebra vector
$\zeta\in \Liealg$ such that
\begin{equation*}
  Z(g) = T_e\Right_g \cdot \zeta \quad \forall g \in G \,,
\end{equation*}
and its left-trivialization is $\Ad_{g^{-1}}\zeta$. Hence, the
left-trivialization of the momentum $J_Z$ is
$\gpairing{\I\O}{\Ad_{g^{-1}}\zeta}$ and that of $\Emov TZ$ is
\begin{equation}\label{MovEn4AlmostLRSystems}
  E_{T,Z}(g,\O)
  \;=\;
  \frac12\gpairing {\I\O}{\O} - \gpairing{\I\O}{\Ad_{g^{-1}}\zeta }\,.
\end{equation}

We conclude this subsection with two remarks.  First, even though the
left-invariance of the Lagrangian requires the potential energy to be a
constant, and hence $L=T+b$, on account of the remark at
the end of section~4.1 any $Z$-invariant potential energy $V$
can be included into the Lagrangian without destroying the 
existence of a conserved moving energy, which is
now the restriction to $M$ of $T+V-J_Z$. Hence, in the absence
of gyrostatic terms, 
\beq{Emov_T-V}
  \Emov {T-V}Z =\Emov TZ + V \,.
\eeq
In the special case of the LR systems, a particular class of
$Z$-invariant potentials has been identified in \cite{LGNthesis}
(section 4.6). 

In all these systems, the moving energy $\Emov LZ|_M$ is
horizontal and the generator $Z$ preserves the Lagrangian. Therefore,
by Proposition \ref{p:MovEn-KI}, if $\cD$ is $Z$-invariant then
$\Emov LZ|_M$ is kinematically interpretable, in the sense of 
Definition \ref{Def2}. A particular class of systems with this
property are the LR systems of the next subsection.

\subsection{The case of LR systems. }
\label{ss:LR}
LR systems are the subclass of the class of systems on Lie groups
considered in the previous subsection in which the affine constraint
distribution $\cM=Z+\cD$ is right-invariant, namely, not only $Z$ but
also $\cD$ is right-invariant. The right-invariance of $\cD$ means
that $\cD_{g} = T_e\Right_g\cdot\cD_e$ for all $g\in G$, or 
equivalently, that $\cD$ is the null space of a set of
right-invariant 1-forms on $G$. 

LR systems were introduced by Veselov and Veselova in 
\cite{Veselov-Veselova-1988}, who focussed mostly on the case of
linear constraints, with $\cM=\cD$, and of purely kinetic Lagrangian,
namely $L=T$. The prototype of these systems is the renowned Veselova
system \cite{Veselova,Veselov-Veselova-1986}, which describes the
motion by inertia of a rigid body with a fixed point under the
constraint that the angular velocity remains orthogonal to a
direction fixed in space (linear case) or, more generally, that the component of
the angular velocity in a direction fixed in space is constant
(affine case).

As proven in \cite{Veselov-Veselova-1988}, LR systems have
remarkable properties: the existence of an invariant measure and the
conservation of the (restriction to the constraint manifold of the)
momentum covector. A difference between the linear and the affine
cases concerns of course the energy, which is conserved in the former
case but not in the latter case. However, it was shown in
\cite{Fedorov,fedorov-kozlov,fedorov-jovanovic,LGN2007}
that the affine Veselova system,
and an $n$-dimensional generalization of it, possess a first
integral which was there regarded as an ``analog of the
Jacobi-Painlev\'e integral'' or as a ``modified Hamiltonian''.
As remarked in \cite{BMB2015}, this function turns out to be a moving
energy. In fact, in full generality,
Proposition \ref{p:MovEn4AlmostLRSystems} implies the following

\begin{Corollary} \label{c:MovEn4AlmostLRSystems} Any affine LR
system $(L,G,\cM=Z+\cD)$ has the conserved moving energy $\Emov LZ|_M$.
\end{Corollary}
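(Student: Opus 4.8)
The plan is to recognize the Corollary as a direct specialization of Proposition \ref{p:MovEn4AlmostLRSystems}, so that nothing beyond unwinding the definition of an affine LR system is required. First I would recall what it means for $(L,G,\cM=Z+\cD)$ to be an affine LR system: the configuration manifold is a Lie group $G$, the Lagrangian $L$ is left-invariant, and the affine constraint distribution $\cM=Z+\cD$ is right-invariant, which, by the definition given at the start of section \ref{ss:LR}, means that both the vector field $Z$ and the distribution $\cD$ are right-invariant.

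The key observation is then that these data already satisfy, verbatim, the hypotheses of Proposition \ref{p:MovEn4AlmostLRSystems}: that proposition only asks that $L$ be left-invariant and that $\cM$ admit a decomposition $\cM=Z+\cD$ with $Z$ right-invariant. The right-invariance of $\cD$ is an additional structural feature of LR systems---the one responsible, for instance, for the existence of an invariant measure and for the conservation of the momentum covector---but it is not needed for the present conclusion and plays no role in the argument.

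Consequently I would simply invoke Proposition \ref{p:MovEn4AlmostLRSystems} to conclude that the horizontal moving energy $\Emov LZ|_M = E_L-\langle p,Z\rangle|_M$ is a first integral of $(L,G,\cM)$. For completeness one may recall the mechanism underlying that proposition: since $Z$ is right-invariant its flow consists of left translations, and since $L$ is left-invariant this gives $\tgG Z(L)=0$ on all of $TG$, so that item ii of Corollary \ref{Cor:Sym} applies (equivalently Corollary \ref{Cor:Noether}, with $Z-Z=0\in\G(\cD)$). There is essentially no obstacle to overcome here: the entire content resides in Proposition \ref{p:MovEn4AlmostLRSystems}, and the Corollary is obtained merely by checking that an affine LR system is one of the systems to which that proposition applies.
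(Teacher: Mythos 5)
Your proposal is correct and matches the paper's own argument: the Corollary is stated there precisely as an immediate consequence of Proposition \ref{p:MovEn4AlmostLRSystems}, since an affine LR system is by definition a system on a Lie group with left-invariant $L$ and right-invariant $Z$ (the additional right-invariance of $\cD$ being irrelevant here). Your recap of the underlying mechanism ($\tgG Z(L)=0$ plus item ii of Corollary \ref{Cor:Sym}) is exactly the paper's proof of that Proposition.
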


In order to compare the moving energy $\Emov L Z|_M$ of Corollary
\ref{c:MovEn4AlmostLRSystems} with the first
integral found in \cite{Fedorov,fedorov-kozlov,LGN2007}, we give the
expression of $\Emov LZ$ in the special case of a Lie group $G$ for
which there is an Ad-invariant inner product in $\Liealg$, which
includes the case of $\SO n$. Since
\cite{Fedorov,fedorov-kozlov,LGN2007} did not consider gyrostatic
terms, on account of \for{Emov_T-V} we limit ourselves to $L=T$.
In general, the right-invariant affine distribution
$\cM=Z+\cD$ can be specified, via right-translations, by a set of
independent covectors $a^1, \dots, a^k \in \Liealg^*$  that span the
annihilator $\cD_e^\circ$ and by the vector $\zeta \in \Liealg$ that
specifies the vector field $Z$. Hence, the constraint can be written
\begin{equation}\label{E:ConstLRGen}
   \gpairing {a^j}{\omega - \zeta} = 0, \qquad j=1,\dots, k
\end{equation}
where $\omega= \Ad_g\Omega$ is the right-trivialization of the
velocity vector $\dot g\in T_gG$.
By means of an $\Ad$-invariant inner product $\ip\ \ $,
the covectors $a^1,\ldots,a^k$ are interpreted as
elements of $\Liealg$ and can be chosen to be orthonormal. 
Also, $Z$ can be chosen so that $\zeta$ belongs to the span of
$a^1, \dots, a^k$. Hence $\zeta=\sum_{j=1}^k \ip {a^j} \zeta a^j$ and
the left-trivialization of the momentum $J_Z$ takes the form
$\sum_{j=1}^k \ip{a^j}\zeta  \ip{\I\Omega}{\gamma^j} $, where
$\gamma^j=\Ad_g^{-1}a^j$ are the so-called Poisson vectors. In
conclusion,
\begin{equation*}
  E_{T,V} (g,\Omega) = 
  \frac12\ip {\I \Omega }\Omega - 
   \sum_{j=1}^k \ip{a^j}\zeta  \ip {\I \Omega}{\gamma^j} \,.
\end{equation*}
When $G=\SO n$, this coincides with the first integral of the
$n$-dimensional affine Veselova system given in
\cite{Fedorov,fedorov-kozlov,LGN2007}, except that  the constants
$\ip{a^j}\zeta$ are there written, using the constraint
\for{E:ConstLRGen}, as $\ip{\gamma^j}\O$.

The right-invariance of the distribution $\cD$ makes the affine LR
systems very special among those considered in the previous
subsection. In particular, as mentioned above, the restriction to
the constraint manifold of the momentum covector is conserved
\cite{Veselov-Veselova-1988}. This fact is accounted for, without any
computation, by Proposition \ref{prop5}:

\begin{Proposition}\label{P:IntegralsLR}
{\rm\cite{Veselov-Veselova-1988}} 
Consider an affine LR system $(L,G,\cM=Z+\cD$). Denote $Y_\xi$ the
infinitesimal generator of the left-action of $G$ on itself by
left-translations associated to $\xi\in\mathfrak g$. Then, for any 
$\xi \in \cD_e\subset \mathfrak g$, $J_{Y_\xi}|_M =  \langle p
,Y_\xi\rangle |_M$ is a first integral of the system.
\end{Proposition}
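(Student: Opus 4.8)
The plan is to apply Proposition \ref{prop5} to the vector field $Y=Y_\xi$: I will verify its items i and ii, which then force item iii, and item iii is exactly the assertion to be proved.

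First I would identify $Y_\xi$ explicitly. Since $Y_\xi$ generates the action of $G$ on itself by \emph{left}-translations, its flow is $\P^{Y_\xi}_t=\Left_{\exp(t\xi)}$, and consequently $Y_\xi$ is the \emph{right}-invariant vector field taking the value $\xi$ at the identity, namely $Y_\xi(g)=T_e\Right_g\cdot\xi$ for all $g\in G$. (In a matrix group this is $Y_\xi(g)=\xi g$, with flow $g\mapsto\exp(t\xi)g$.)

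Next, to establish item ii of Proposition \ref{prop5}, that is $\tg{Y_\xi}(L)=0$, I would observe that $\tg{Y_\xi}(L)$ is the derivative at $t=0$ of $L\circ\tgG\Left_{\exp(t\xi)}$, the pullback of $L$ along the tangent lift of the flow $\P^{Y_\xi}_t=\Left_{\exp(t\xi)}$. By the hypothesized left-invariance of $L$ one has $L\circ\tgG\Left_{\exp(t\xi)}=L$ for every $t$, so this derivative vanishes identically on $TG$, in particular on $M$. To establish item i, $Y_\xi\in\G(\cR^\circ)$, I would use the inclusion $\cD_q\subseteq\cR^\circ_q$ recalled in section 2.2, which reduces the task to showing $Y_\xi\in\G(\cD)$. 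This is where the defining property of an LR system enters: $\cD$ is right-invariant, $\cD_g=T_e\Right_g\cdot\cD_e$ for all $g$. Since $\xi\in\cD_e$ by assumption and $Y_\xi(g)=T_e\Right_g\cdot\xi$, we obtain $Y_\xi(g)\in\cD_g$ for every $g$, hence $Y_\xi\in\G(\cD)\subseteq\G(\cR^\circ)$.

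With items i and ii in hand, Proposition \ref{prop5} gives item iii, so $J_{Y_\xi}|_M=\langle p,Y_\xi\rangle|_M$ is a first integral of the system, as claimed. The only point demanding care---rather than a genuine obstacle---is the left/right bookkeeping: the infinitesimal generator of left-translations is a right-invariant field, and it is precisely this right-invariance that matches the right-invariance of $\cD$ characterizing LR systems (giving item i), while the left-invariance of $L$ is what annihilates $\tg{Y_\xi}(L)$ (giving item ii). Both structural features of the LR setting are thus used, each for a different hypothesis of Proposition \ref{prop5}.
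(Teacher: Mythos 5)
Your argument is correct and is essentially identical to the paper's proof: both identify $Y_\xi(g)=T_e\Right_g\cdot\xi$ as the right-invariant field with value $\xi$ at $e$, deduce $\tg{Y_\xi}(L)=0$ from the left-invariance of $L$ and $Y_\xi\in\G(\cD)\subseteq\G(\cR^\circ)$ from the right-invariance of $\cD$, and conclude via Proposition \ref{prop5}. Your version merely spells out the left/right bookkeeping more explicitly than the paper does.
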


\begin{proof} $Y_\xi(g)=T_e\Right_g\cdot\xi$. By left-invariance of
$L$, $\tgG Y_\xi(L)=0$. By right-invariance of $\cD$, $Y_\xi$ is a
section of $\cD$. The statement now follows from Proposition \ref{prop5}. 
\end{proof}

Note that if $\cD$ is not right-invariant then the infinitesimal
generators $Y_\xi$ might be not sections of $\cD$: this is the reason
why, notwithstanding the fact that the Lagrangian has the appropriate
invariance property, the momenta  $J_{Y_\xi}|_M$ are in general not
conserved for the systems considered in the previous subsection. 

Proposition \ref{P:IntegralsLR} implies that the affine LR systems
have a multitude of conserved moving energies: for any $\xi\in\cD_e$,
$\Emov L{Z-Y_\xi}|_M= \Emov LZ|_M-J_{Y_\xi}|_M$ is a conserved moving
energy. On account of Proposition \ref{p:MovEn-KI}, they are all
kinematically interpretable and hence associated to time-dependent
changes of coordinates.

\vskip2mm\noindent
\noindent{\it Remark.} The 3-dimensional affine Veselova system allows
a Hamiltonization after reduction, in terms of a rank-four Poisson
structure, with the above moving energy
playing the role of the Hamiltonian \cite{LGN2007}.

\section{A convex body that rolls on a steadily rotating plane}
\label{s:ConvexBody}

\subsection{The system. } We consider now the system formed by a
heavy convex rigid body constrained to roll without slipping on a
horizontal plane $\Pi$, which rotates uniformly around a vertical
axis, see Figure \ref{F:rollbody}. The case in which the plane is at
rest is classical and was studied for specific geometries of the body
already by Routh \cite{routh} and Chaplygin \cite{chaplygin} (see
\cite{BorMam,CDS} for recent treatments) while, to our knowledge, the
case in which the plane $\Pi$ is rotating has been studied only in
two particular cases---that of a homogeneous sphere 
\cite{earnshaw,pars,NF,BKMM,FS2016} and that of a disk
\cite{ferrario-passerini} (which however describes the disk in
a frame that co-rotates with the
plane, in which the nonholonomic constraint is linear). Here, we
exclude the latter case because we assume that the body has a smooth
(i.e. $C^\infty$) surface. 

\begin{figure}
\centering
\includegraphics[height=6cm]{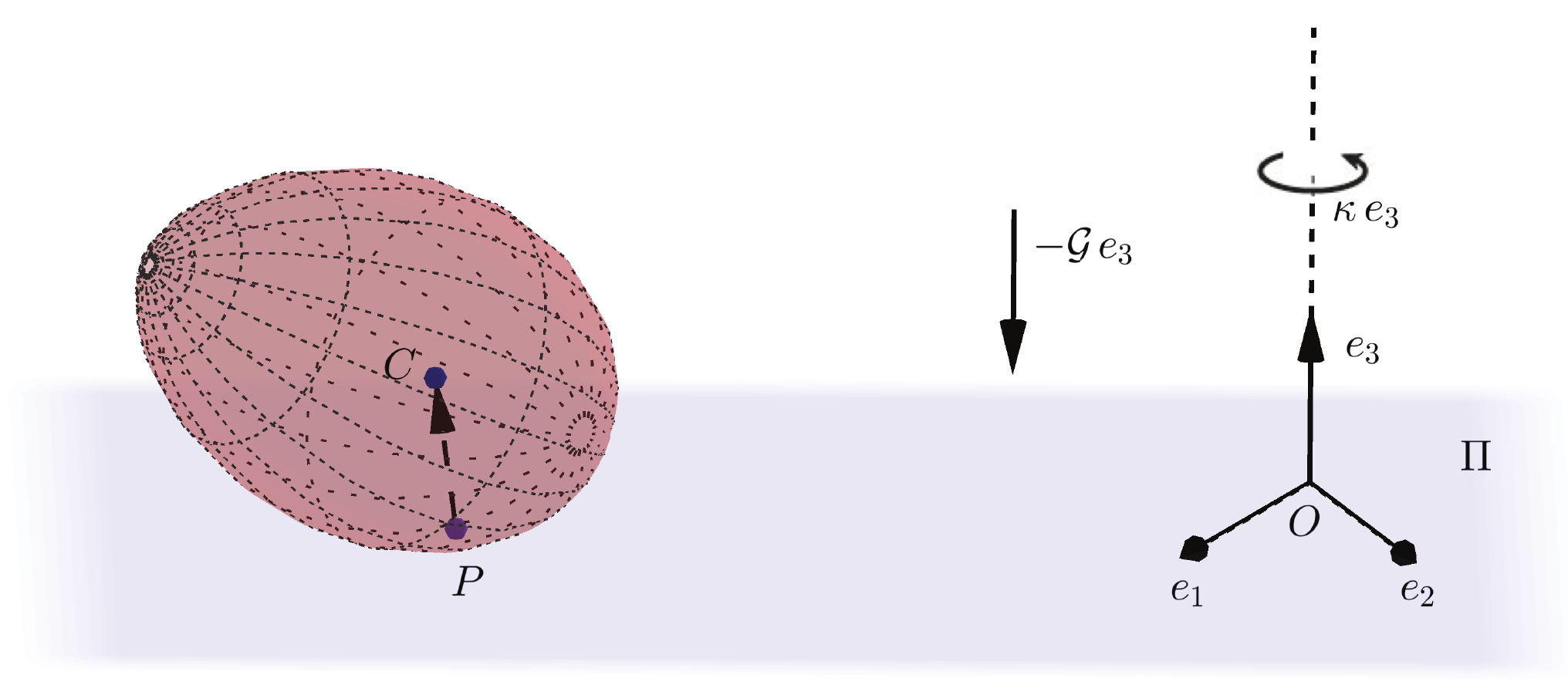}
\vskip3mm
\captionsetup{width=0.7\textwidth}
\caption{
The heavy convex rigid body with smooth surface that rolls without
slipping on a plane that rotates with constant angular
velocity~$\kappa e_3$.
}
\vskip5mm
\label{F:rollbody}
\end{figure}

We describe the system relatively to a spatial inertial frame
$\Fs=\{O;e_1,e_2,e_3\}$. We assume that the plane $\Pi$ rotates around
the axis $e_3$ of this frame, with constant angular velocity $\kappa e_3$,
$\kappa\in\reali$, and that it is superposed to the subspace spanned by
$e_1$ and $e_2$. 

Let $x=(x_1,x_2,x_3)$ be the coordinates in $\Fs$ of the
center of mass $\C$ of the body. The system is subject to the
holonomic constraint that the body has a point in contact with the
plane. The configuration manifold is thus $Q=\reali^2\times\SO 3
\ni (q,g)$, where $q=(x_1,x_2)\in\rdue$ and $g\in \SO3$ is
the attitude matrix that relates the inertial frame $\Fs$ to a frame
$\Sigma_b=\{\C;E_1,E_2,E_3\}$ attached to the body, and with the
origin in~$\C$. We assume that $g$ is chosen so that the
representatives $u^s$ and $u^b$ of a same vector in the two frames
$\Fs$ and $\Fb$ are related by $u^s=gu^b$. 

We denote by $\o$ the angular velocity of the body relative to the
inertial frame $\Fs$, and by $\O$ its representative $\o^b$ in the body
frame $\Fb$.

As in the previous section we will left-trivialize the factor $T\SO
3$ of $TQ$, but now we identify the Lie algebra $\mathrm{so}(3)$ with
$\reali^3$ via the hat map $\hat{}:\rtre\to\mathfrak{so}(3)$, with
$\hat a=a\times$ for any $a\in\reali^3$. Thus,
$TQ\equiv\rdue\times\rdue\times\SO3\times\rtre \ni (q,\dot q, g,
\O)$.

The holonomic constraint that the body touches the plane is $\PS{e_3}{OC}
=\PS{e_3}{PC}$, where $P$ is the point of the body in contact with
the plane, or $x_3=\PS{e_3}{PC}$. Hence, $\dot x_3=\PS{e_3}(\o\times
PC)$. If we denote by $\rho(g)$ the representative in $\Fb$ of the
vector $PC$ and by $\gamma(g)$ the representative 
of $e_3$ in $\Fb$, which is the so-called Poisson vector and equals 
$g^{-1}e_3^s=g^{-1}(0,0,1)^T$, then the
holonomic constraint is $x_3=\PS{\gamma(g)} {\rho(g)}$ and 
\begin{equation}\label{dotx3}
 \dot x_3 = \PS {\gamma(g)} [\O\times \rho(g)]\,.
\end{equation}

The potential energy of the weight force is $V(g)= m\mathcal{G}\PS
{e_3}{OC} = m\mathcal{G}\PS {\gamma(g)} {\rho(g)}$, with the
obvious meaning of the constants $m$ and $\mathcal G$. Here,
$\gamma$ and $\rho$ are known functions of the attitude $g\in\SO 3$,
but they are related by the Gauss map $G:\cS\to S^2$ of
the surface $\cS$ of the body. Specifically, given that $\gamma$ is the
inward unit normal vector to $\cS$ at the point of $\cS$ of coordinates
$-\rho$, $\gamma(g)=-G(-\rho(g))$. Since $\cS$ is assumed
to be smooth and convex, $G$ is a diffeomorphism and 
we may also write $\rho=F\circ \gamma$ with
$F(\gamma)=-G^{-1}(-\gamma)$. Hence
$V=v\circ\gamma$ with $v(\gamma)=m\mathcal{G}\PS {\gamma}
{F(\gamma)}$. In the sequel, we shall routinely write $\gamma$ for
$\gamma(g)$ and $\rho$ for $F(\gamma(g))$. With these conventions,
the left-trivialized Lagrangian of the system is 
\begin{equation} \label{ell}
  L(q,g,\dot q,\Omega) =
  \frac{1}{2}\PS{\I\Omega}\Omega + 
  \frac{m}{2}\Big(\dot q_1^2+\dot q_2^2 +
      \big(\PS{\gamma}[\O\times \rho]\big)^2 \Big) - v(\gamma) 
\end{equation}
and depends on the attitude $g$ only through $\gamma$. Here, $\I$ is
the inertia tensor of the body relative to its center of mass.

The condition of no-slipping of the body on the plane is obtained
by equating the velocities (relative to $\Fs$) of the point 
$P$ of the body, which is $\frac d{dt}(OC)+\o\times CP$, and of
the point of the plane which is in contact with
$P$, which is $\kappa e_3\times OP = \kappa e_3\times(OC+CP)$. Using 
representatives, the condition of no slipping is thus
\beq{dotx-anolonomo}
  \dot x = g (\Omega \times\rho) + \kappa e_3^s \times (x-g\rho)
  \,.
\eeq
The first two components of this condition define an 8-dimensional
affine subbundle $M$ of $TQ$ which can be identified with
$\rdue\times\SO3\times\rtre\ni(q,g,\O)$ (the third component of
\for{dotx-anolonomo} is nothing but \for{dotx3}).

In order to simplify the notation we identify
$Q=\rdue\times\SO3\ni(q,g)$ with its embedding in
$\rtre\times\SO3\ni(x,g)$ and 
$M=\rdue\times\SO3\times\rtre\ni(q,g,\O)$ with its embedding in
$\rtre\times\SO3\times\rtre\ni(x,g,\O)$ given, in both cases, by
$x(q,g)=(q_1,q_2,\gamma\cdot\rho)$. Correspondingly, we identify 
$TQ\equiv\rdue\times\SO3\times\rdue\times\rtre\ni(q,g,\dot q,\O)$
with its embedding in
$\rtre\times\SO3\times\rtre\times\rtre\ni(x,g,\dot x,\O)$, with $x$
as above and $\dot x=(\dot q_1,\dot q_2,\dot x_3)$ with $\dot x_3$
as in \for{dotx3}.

The affine subbundle $M$ corresponds to an affine distribution
$\cM=Z+\cD$ on $Q=\rdue\times\SO3$ which, once left-trivialized and
embedded in $\rtre\times\SO3\times\rtre\times\rtre$, is given by
\begin{equation}
\label{E:Dist-Rolling-body}
\begin{split}
   \mathcal{D}_{(q,g)}&
   =
   \left\{ \big( g(\O\times\rho) \,,\, \O\big) \,:\, \O\in\rtre \right\} 
  \\
  Z(q,g) & 
  =
  \big(\kappa e_3^s\times(x-g\rho)\,,\,  0\big) \,.
\end{split}
\end{equation}

\subsection{The conserved moving energy. } 
The energy is not conserved in the nonholonomic system $(L,Q,\cM)$
just constructed. However, being independent of $q$ and depending on
$g$ only through the Poisson vector $\gamma$, the Lagrangian
\for{ell} is invariant under the lift of an action of $\SE2$. We may
thus try to construct a moving energy using the infinitesimal
generator of the action of a subgroup.

Specifically, we consider the $S^1$-action which
(after the aforementioned embedding of $Q$ in
$\rtre\times\SO3$) is given by
\beq{S1-action}
    \theta.(x,g)=(R_\theta x, R_\theta g) 
\eeq
where $R_\theta$ is the rotation matrix in $\SO3$ that rotates an angle
$\theta\in S^1$ about the third axis $(0,0,1)^T=e_3^s$. This action
leaves the Poisson vector $\gamma$ invariant. Its lift
to $M$ (as embedded in $\rtre\times\SO3\times\rtre$) is 
\beq{lift-S1-action}
    \theta.(x,g,\O)=(R_\theta x, R_\theta g,\O) 
\eeq
and clearly leaves the Lagrangian  \for{ell} invariant. 

The infinitesimal generator of the action \for{S1-action}
that corresponds to $\xi\in
\reali\cong \mathfrak{s}^1$, once left-trivialized, is the vector field
with components
\begin{equation}\label{E:Y_kappa-rollBody}
  Y_\xi(q,g) = \big(\xi e_3^s\times  x \,,\,  \xi \gamma \big)
  \in\rtre\times\rtre \,.
\end{equation}
Therefore
$$
   Y_k-Z 
   = \big( \kappa e_3^s \times g\rho \,,\,  \kappa \gamma \big)
   = \big( g(\kappa \gamma \times \rho ) \,,\,  \kappa \gamma \big )
   \in \cD_{(q,g)} \,.
$$
Hence, the hypotheses of Corollary \ref{Cor:Sym} are satisfied and 
the moving energy $\Emov L{Y_\kappa}|_M$ is a first  integral of the system. 

In order to give an expression for this moving energy
we find convenient to introduce the vector function
\begin{equation}\label{E:DefAngMomRollBody}
  K(g,\Omega)
  =
  \I \Omega + m \rho\times \big(\Omega \times \rho\big)  \,.
\end{equation}

\begin{Proposition}\label{p:MovEnergyRollingBody} 
\begin{equation}\label{E:MovEnergyRollingBody}
   \Emov L {Y_\kappa}|_M = 
   \frac12 \PS K\O  + m\mathcal{G} \rho \cdot \gamma - 
   \kappa K\cdot \gamma+  \frac12 m\kappa^2 \big(\|\rho\|^2
   - \|x\|^2 \big) \,.
\end{equation}

\end{Proposition}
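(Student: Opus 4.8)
The plan is to compute $\Emov L{Y_\kappa}|_M = E_L - \langle p, Y_\kappa\rangle|_M$ directly by evaluating the two pieces separately and collecting terms. Since $\tg{Y_\kappa}(L)|_M = 0$ and $Y_\kappa - Z \in \G(\cD)$, Corollary \ref{Cor:Sym} already guarantees conservation, so the only task here is to verify the explicit formula \for{E:MovEnergyRollingBody}.

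First I would write down the energy $E_L = T + V\circ\pi$. From the Lagrangian \for{ell}, the kinetic energy restricted to $M$ (where $\dot q_1,\dot q_2$ are determined by the rolling constraint \for{dotx-anolonomo} and $\dot x_3$ by \for{dotx3}) is
\[
  T = \tfrac12\PS{\I\O}\O + \tfrac{m}{2}\big(\dot x_1^2 + \dot x_2^2 + \dot x_3^2\big) = \tfrac12\PS{\I\O}\O + \tfrac{m}{2}\|\dot x\|^2 \,,
\]
and $V = m\mathcal{G}\,\rho\cdot\gamma$. The point is that on $M$ the full velocity of the center of mass is $\dot x = g(\O\times\rho) + \kappa e_3^s\times(x - g\rho)$, so I would expand $\|\dot x\|^2$ using this. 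The momentum term $\langle p, Y_\kappa\rangle$ is $p\cdot Y_\kappa$ where $p = \der L{\dot q}$ in the appropriate trivialization; computing it amounts to pairing the conjugate momenta (the angular momentum-type quantity built from $\I$ and $m$, which is exactly $K$ of \for{E:DefAngMomRollBody}, together with the linear momentum $m\dot x$) against the generator $Y_\kappa = (\kappa\, e_3^s\times x,\ \kappa\gamma)$ of \for{E:Y_kappa-rollBody}.

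The main computational obstacle will be organizing the expansion of $\tfrac{m}{2}\|\dot x\|^2$ and of $p\cdot Y_\kappa$ so that the cross terms combine into the stated form. I expect that $\tfrac12 K\cdot\O$ will emerge as the homogeneous-degree-two-in-$\O$ part of $T$ after substituting $\dot x = g(\O\times\rho)$ (the $\kappa$-independent piece of the velocity) and using $\|g(\O\times\rho)\|^2 = \|\O\times\rho\|^2$ together with the definition of $K$; here one uses that $g\in\SO3$ preserves the Euclidean inner product and that $\gamma\cdot(\O\times\rho) = \O\cdot(\rho\times\gamma)$ so the $\dot x_3$ contribution folds into the same $K$. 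The potential supplies $m\mathcal{G}\,\rho\cdot\gamma$ verbatim. The linear-in-$\kappa$ terms, coming from the cross term in $\|\dot x\|^2$ between $g(\O\times\rho)$ and $\kappa e_3^s\times(x-g\rho)$ and from the $\kappa\gamma$ slot of $Y_\kappa$ paired against $K$, must collapse to $-\kappa K\cdot\gamma$; this is where $e_3^s = g\gamma$ and the cyclic identities of the scalar triple product do the work, and it is the step most prone to sign and bookkeeping errors.

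Finally the terms quadratic in $\kappa$, arising from $\tfrac{m}{2}\|\kappa e_3^s\times(x-g\rho)\|^2$ and from $m\dot x\cdot(\kappa e_3^s\times x)$ in the momentum pairing, should reduce to $\tfrac12 m\kappa^2(\|\rho\|^2 - \|x\|^2)$. To see this I would use $\|e_3^s\times w\|^2 = \|w\|^2 - (e_3^s\cdot w)^2$ for any $w$, apply it with $w = x - g\rho$, and exploit that the third component of $x$ equals $\gamma\cdot\rho = e_3^s\cdot g\rho$ so the $e_3^s\cdot(\cdot)$ contributions cancel, leaving a difference of squared norms that simplifies via $\|x - g\rho\|^2$ and the orthogonality-induced cancellations to exactly $\|\rho\|^2 - \|x\|^2$. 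Assembling the degree-$0$, degree-$1$ and degree-$2$ parts in $\kappa$ then yields \for{E:MovEnergyRollingBody}.
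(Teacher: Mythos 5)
Your proposal is correct and follows essentially the same route as the paper: compute $E_L|_M$ and $J_{Y_\kappa}|_M$ separately by substituting the rolling constraint, subtract, and use the holonomic relation $e_3^s\cdot(x-g\rho)=0$ (equivalently $X\cdot\gamma=\rho\cdot\gamma$) to reduce the $\kappa^2$ terms to $\tfrac12 m\kappa^2(\|\rho\|^2-\|x\|^2)$; the only difference is that the paper does the bookkeeping in the body frame via $X=g^{-1}x$, while you stay in the spatial frame. One small imprecision: the momentum conjugate to $\O$ in \for{ell} is $\I\O+m\big(\gamma\cdot(\O\times\rho)\big)\,\rho\times\gamma$, not $K$ itself---its pairing with $\kappa\gamma$ yields only $\kappa\,\I\O\cdot\gamma$, and the remaining piece of $-\kappa K\cdot\gamma$ comes from the cross term in $\|\dot x\|^2$, exactly as you anticipate elsewhere in your sketch.
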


\begin{proof} 
Instead of parameterizing the embedding of $M$ in
$\rtre\times\SO3\times\rtre$ with $(x,g,\O)$, as done so far, we will
parameterize it with $(X,g,\O)$, with $X=g^{-1}x$, the representative of
$OC$ in the body frame. Specifically, $M$ is the the submanifold
of $\rtre\times\SO3\times\reali^3$ given by the condition
$X\cdot \gamma = \rho \cdot \gamma$ (the holonomic constraint).
Similarly, we parametrize $TQ\equiv
\rtre\times\SO3\times\reali^3\times\rtre$ with $(X,g,\dot x,\O)$ (notice
that we keep the spatial representative $\dot x$ of the velocity of
the center of mass). Thus, $M$ is the subbundle of $TQ$ defined by
the two conditions
\begin{equation}\label{2conditions}
  X\cdot \gamma = \rho \cdot \gamma 
  \,,\qquad 
  g^{-1}\dot x=\Omega \times \rho  + \kappa \gamma \times (X - \rho)
  \,.
\end{equation}
The energy $E_L$ of the system is the sum of the kinetic and
potential energies:
\begin{equation*}
  E_L =\frac{1}{2}\I \Omega\cdot \Omega + \frac{m}{2} \|\dot x\|^2 +
  m \mathcal{G}\gamma \cdot \rho 
\end{equation*}
Its restriction to $M$ is given by
\begin{equation*}
\begin{split}
 \left . E_L \right |_{M}
 &\;=\; 
  \frac12\I \Omega\cdot \Omega 
  + m \mathcal{G}\gamma \cdot \rho + \frac{m}{2}\|\rho \times \Omega\|^2 
  + m\kappa  \left( \Omega\times\rho \right ) 
      \cdot ( \gamma \times (X-\rho))
  + \frac{m\kappa^2}{2} \| \gamma \times (X- \rho) \|^2
  \\
  &
  \;=\;  \frac{1}{2} K\cdot \Omega  +m \mathcal{G}\gamma \cdot \rho
    - m\kappa \gamma \cdot (\rho \times( \Omega \times \rho)) 
    + m\kappa  \left ( \Omega  \times  \rho \right )
      \cdot ( \gamma \times X  )
    + \frac{m\kappa^2}{2} \| \gamma \times X \|^2
   \\ 
   & \qquad \qquad
   + \frac{m\kappa^2}{2}  \| \gamma \times  \rho \|^2  
   - m\kappa^2 ( \gamma \times X)\cdot ( \gamma \times  \rho).
\end{split}
\end{equation*}
On the other hand, given \eqref{E:Y_kappa-rollBody} and the form of
the kinetic energy metric defined by \eqref{ell}, we find
\begin{equation*}
   J_{Y_\kappa} 
   \;=\;  
   m \kappa (q_1\dot q_2 -q_2\dot q_1)+\kappa \I \Omega \cdot \gamma 
   \;=\; 
   m  \kappa  \gamma \cdot  (X \times (g^{-1}\dot x) )
     +\kappa \I \Omega\cdot  \gamma.
\end{equation*}
Its restriction to $M$ is computed to be
\begin{equation*}
  \left . J_{Y_\kappa} \right |_{M}
  \;=\; 
  m\kappa  (\Omega \times \rho) \cdot (\gamma \times X)
  - m\kappa^2(\gamma \times X)\cdot (\gamma \times \rho)
  + m\kappa^2 \| \gamma \times X\|^2 + \kappa \I \Omega \cdot \gamma
  \,.
\end{equation*}
Hence the moving energy $E_{L,Y_\kappa}|_M =\left . E_L \right |_{M}-
\left . J_{Y_\kappa} \right |_{M} $ is given by
\begin{equation*}
  E_{L,Y_\kappa}|_M
  \;=\; 
  \frac{1}{2}K\cdot \Omega + m\mathcal{G} \rho \cdot \gamma
  -\kappa (K\cdot \gamma)
  +\frac{m\kappa^2}{2}
    \left ( \|\gamma \times \rho\|^2 - \|\gamma \times X\|^2 \right )
  \,.
\end{equation*}
This is equivalent to \eqref{E:MovEnergyRollingBody} because, in $M$,
$X\cdot \gamma=\r\cdot\gamma$ and hence
$\|\gamma \times \rho\|^2 - \|\gamma \times X\|^2
= \|\rho\|^2-\|X\|^2$, and because $\|X\|=\|x\|$.
\end{proof}

We note that the existence of the moving energy $E_{L,Y_\kappa}|_M$
has the following dynamical consequence:

\begin{Corollary} \label{c:UnboundedMotions}
If the motion of the rolling body is unbounded, then its angular
velocity $\Omega$ satisfies $\limsup_{t\to \infty} \|\Omega\| =  \infty$.
\end{Corollary}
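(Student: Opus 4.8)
The plan is to exploit the conservation of the moving energy $\Emov L{Y_\kappa}|_M$ established in Proposition~\ref{p:MovEnergyRollingBody}, together with the compactness of $\SO3$, in order to bound $\|x\|^2$ from above by a quadratic polynomial in $\|\O\|$ along any motion, and then to argue by contradiction. The argument uses $\kappa\neq0$ (the genuinely rotating case intended here): for $\kappa=0$ the energy itself is conserved and the conclusion is false, as a homogeneous sphere rolling uniformly along a straight line shows.

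First I would observe that, since $Q=\rdue\times\SO3$ with $\SO3$ compact and $x=(q_1,q_2,\PS\gamma\rho)$ with $\PS\gamma\rho$ bounded, a motion is unbounded if and only if $\|x\|$ is unbounded along it. Because $\gamma$ and $\rho$ are smooth functions of $g\in\SO3$, the quantities $\|\rho\|$, $\|\gamma\|$ and $\PS\rho\gamma$ are uniformly bounded; set $\Lambda:=\lambda_{\max}(\I)+m\max_{g\in\SO3}\|\rho(g)\|^2<\infty$. From \for{E:DefAngMomRollBody} and $\|\rho\times(\O\times\rho)\|\le\|\rho\|^2\|\O\|$ one obtains the two estimates $\|K\|\le\Lambda\|\O\|$ and hence $K\cdot\O\le\Lambda\|\O\|^2$.

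Next I would solve \for{E:MovEnergyRollingBody} for $\|x\|^2$. Writing $E_0$ for the (constant) value of $\Emov L{Y_\kappa}|_M$ along a fixed motion,
\[
  \tfrac12 m\kappa^2\|x\|^2
  =
  \tfrac12 K\cdot\O - \kappa\, K\cdot\gamma
    + m\mathcal G\,\PS\rho\gamma + \tfrac12 m\kappa^2\|\rho\|^2 - E_0 .
\]
Using the bounds above, $\tfrac12 K\cdot\O\le\tfrac12\Lambda\|\O\|^2$ and $|\kappa\,K\cdot\gamma|\le|\kappa|\Lambda\|\gamma\|\,\|\O\|$, while the last three terms are bounded by a constant; hence there exist constants $C_1,C_2>0$ such that
\[
  \tfrac12 m\kappa^2\|x\|^2 \;\le\; \tfrac12\Lambda\|\O\|^2 + C_1\|\O\| + C_2
\]
at every instant of the motion.

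Finally I would conclude. If the motion is unbounded there is a sequence $t_n\to\infty$ with $\|x(t_n)\|\to\infty$; since $\kappa\neq0$, the left-hand side of the last inequality then diverges, forcing its right-hand side, and therefore $\|\O(t_n)\|$, to diverge as well, so that $\limsup_{t\to\infty}\|\O\|=\infty$. The argument is essentially routine, and I do not expect a genuine obstacle; the only points deserving attention are the uniformity of the constant $\Lambda$, which rests solely on the compactness of $\SO3$ and the continuity of $g\mapsto\rho(g)$, and the correct handling of the $\limsup$ along $t_n\to\infty$. Intuitively, as $\|x\|$ grows the term $-\tfrac12 m\kappa^2\|x\|^2$ in \for{E:MovEnergyRollingBody} becomes large and negative, and the only term capable of compensating it so as to keep the moving energy constant is the kinetic-type term $\tfrac12 K\cdot\O$, which thus forces $\|\O\|$ to grow.
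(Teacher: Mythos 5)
Your proposal is correct and follows essentially the same route as the paper's (much terser) proof: since $\rho$ and $\gamma$ are bounded on the compact factor $\SO3$, the only term in the conserved moving energy \for{E:MovEnergyRollingBody} that can compensate the divergence of $-\tfrac12 m\kappa^2\|x\|^2$ is $\tfrac12 K\cdot\Omega$, which forces $\|\Omega\|$ to blow up along any unbounded motion. Your explicit quadratic bound and the remark that $\kappa\neq0$ is needed are useful elaborations of the same argument, not a different one.
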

\begin{proof}
Since $\rho$ and $\gamma$ are bounded, the only way in which
the conserved moving energy \for{E:MovEnergyRollingBody} remains
bounded as $\|x\|$ becomes large is that $\|\Omega\|$ becomes large. 
\end{proof}

\vskip2mm\noindent
\noindent{\it Remark.} The distribution $\cD$ and the vector field
$Z$ are also invariant under the lifted $S^1$-action
\for{lift-S1-action}. Therefore, the system is invariant under this
action. In the Appendix, we give for
completeness the reduced equations of motion on $M/S^1$.

\section{The $n$-dimensional Chaplygin sphere that rolls on a steadily
rotating hyperplane } \label{s:6}

\subsection{The system.}
It is natural to expect that the discussion of the previous section
admits a multi-dimensional generalization. Here we 
consider the particular case in which the body is an
$n$-dimensional sphere and the center of mass coincides with
its geometric center. If the hyperplane where the rolling takes plane
is not rotating we recover the $n$-dimensional Chaplygin sphere
problem introduced in \cite{fedorov-kozlov}. 

Let $x\in \reali^n$ denote the position of the center of mass of the
sphere written with respect to an inertial frame  $\Sigma_s=\{O; e_1,
\dots ,e_n\}$. We assume that the hyperplane where the rolling takes
place passes through $O$ and has $e_n$ as its normal vector.
Moreover, we assume that  the sphere is `above' this hyperplane so at
all times  the holonomic constraint $x_n=r$ is satisfied, where $r$
is the radius of the sphere. 

The configuration space is $Q=\reali^{n-1}\times \SO n \ni (q,g)$, where
$q=(x_1,\dots, x_{n-1})$. For convenience, in all this section
we embed $\reali^{n-1} \hookrightarrow \reali^n$ by putting  $x_n=r$. We
will also work with  the induced embedding of tangent bundles  $TQ
\hookrightarrow T(\reali^n\times  \SO n )$ defined by the simultaneous
relations $x_n=r$ and $\dot x_n=0$.  

The Lagrangian $L:T(\reali^n\times  \SO n )\to \reali$ is written in the
left-trivialization as
\begin{equation}
\label{E:LagChapBall}
L(x,g,\dot x, \Omega)=\frac{1}{2}\langle \I\Omega , \Omega \rangle  +
m \|\dot x \|^2.
\end{equation}
 As usual  $\Omega=g^{-1}\dot g \in \so(n)$
is the angular velocity written in body coordinates (the
left-trivialization
of the velocity). The pairing $\langle \cdot , \cdot \rangle$ in
\eqref{E:LagChapBall} denotes the Killing metric
\begin{equation*}
  \langle \zeta_1 , \zeta_2 \rangle =
  -\frac{1}{2}\mbox{Trace}( \zeta_1  \zeta_2), 
  \qquad \zeta_1, \zeta_2\in \so(n),
\end{equation*}
and the inertia tensor $\I:\so(n)\to \so(n)$ is a positive definite
symmetric linear operator.

The steady rotation of the hyperplane where the rolling takes place 
is specified by a fixed element $\eta\in \so(n)$ that satisfies
\begin{equation}
\label{E:Cond-eta}
  \eta e_n=0.
\end{equation}
The nonholonomic constraints  of rolling without slipping are
\begin{equation}
\label{E:Constraint-n}
\dot x = r\omega e_n + \eta x,
\end{equation}
where $\omega=\dot g g^{-1}$ is the angular velocity written in space
coordinates (the right-trivialization of the velocity) that satisfies
$\omega=\Ad_g\Omega$. Note that the last component of
\eqref{E:Constraint-n} reads $\dot x_n=0$ so that
\eqref{E:Constraint-n} defines an
affine constraint subbundle of $TQ \subset T(\reali^n\times  \SO n )$.

The constraint \eqref{E:Constraint-n} may be rewritten as 
\begin{equation}
\label{E:Constraint-n-bis}
  \dot x = r(\Ad_g\Omega) e_n + \eta x,
\end{equation}
that defines an affine distribution $\cM=Z+\cD$ on $\reali^n\times\SO n$
that is given in the left-trivialization by
\begin{equation}
\label{E:Dist-Chap-ball}
\begin{split}
   \mathcal{D}_{(x,g)}&
   =
   \left\{ (\dot x,\O) \,:\, 
       \dot x =  r(\Ad_g\Omega) e_n \right\} \,,
  \\
  Z(x,g) & 
  =
  \big(\eta x, 0\big) \,.
\end{split}
\end{equation}
The constraint manifold $M$ is diffeomorphic to
$\reali^{n-1}\times\SO n\times \so(n) \subset
\reali^n\times\SO n\times \so (n)\ni (x,g,\O)$. 

\subsection{The conserved moving energy. } \label{ss:7.2}
The energy is not conserved for the above system. A conserved moving
energy may be found by considering the action of $\SO{n-1}$
on $\reali^n\times \SO n$ defined by $h\cdot (x,g)=(\tilde h x, g)$,
where for $h\in \SO{n-1}$ we denote
\begin{equation}
\label{E:SO(n)Emb}
   \tilde h = \left ( \begin{array}{cc} h & 0 \\ 0 & 1 \end{array} \right
   )\in \SO n.
\end{equation}
This action leaves $Q$ invariant and its tangent lift  clearly
preserves the Lagrangian \eqref{E:LagChapBall}.  

Using \eqref{E:Cond-eta} and the embedding $\SO{n-1}\hookrightarrow
\SO n$ given by \eqref{E:SO(n)Emb}, we can naturally think of
$\eta$ as an element in $\so(n-1)$. Its infinitesimal generator 
$Y_\eta$ is readily computed to be the vector field $Y_\eta(x
,g)=(\eta x,0)$. Therefore, $Y_\eta- Z=0$ and by Corollary
\ref{Cor:Sym} the moving energy $E_{L,Y_\eta}|_M$  is preserved.

In order to give an expression for this moving energy we
introduce the Poisson vector $\gamma(g)=g^{-1}e_n$ and
the matrix\footnote{This matrix was introduced in
\cite{fedorov-kozlov} as  the angular momentum of the sphere about
the contact point.}
\begin{equation}\label{K-2}
  K:=\I\Omega +mr^2 \big( \Gamma(g) \Omega +\Omega\Gamma(g) \big)\in \so(n),
\end{equation}
where $\Gamma(g)$ denotes the symmetric, rank one matrix
$\Gamma:=\gamma \otimes \gamma=\gamma \gamma^T$.

\begin{Proposition}\label{p:MovEnergyChapBall-nD} 
\begin{equation}\label{E:MovEnergyChapBall-nD}
   \Emov L {Y_\eta}|_M (x,g,\dot x, \Omega) = 
   \frac{1}{2} \langle K , \Omega
  \rangle -\frac{m}{2}  \|\eta x\|^2
\,.
\end{equation}

\end{Proposition}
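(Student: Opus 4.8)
The plan is to compute $\Emov L{Y_\eta}|_M = E_L|_M - J_{Y_\eta}|_M$ directly, by restricting both the energy $E_L$ and the momentum $J_{Y_\eta}$ to the constraint manifold $M$ and simplifying. Since $Y_\eta(x,g)=(\eta x,0)$ was already identified, and the Lagrangian \eqref{E:LagChapBall} is explicit, this is in principle a routine (if somewhat lengthy) computation. The strategy mirrors exactly the proof of Proposition~\ref{p:MovEnergyRollingBody}: first write $E_L$ in terms of $(x,g,\dot x,\Omega)$, then use the constraint \eqref{E:Constraint-n-bis} to eliminate $\dot x$, recognize the combination \eqref{K-2}, and finally subtract off $J_{Y_\eta}|_M$.

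First I would record the energy. Because the Lagrangian has no potential and no gyrostatic term, $E_L = T = \frac12\langle\I\Omega,\Omega\rangle + m\|\dot x\|^2$ (here $\|\dot x\|^2$ is the ordinary Euclidean norm in $\reali^n$, with $\dot x_n=0$ on $Q$). To restrict to $M$ I would substitute the constraint $\dot x = r(\Ad_g\Omega)e_n + \eta x$. Expanding $\|\dot x\|^2$ gives three groups of terms: $r^2\|(\Ad_g\Omega)e_n\|^2$, the cross term $2r\,(\Ad_g\Omega)e_n\cdot\eta x$, and $\|\eta x\|^2$. The key identities I expect to need are $\Ad_g\Omega = g\Omega g^{-1}$ and $\gamma=g^{-1}e_n$, so that $(\Ad_g\Omega)e_n = g\Omega\gamma$; this lets me rewrite the $r^2$ term using the symmetric operator $\Gamma=\gamma\gamma^T$ and the Killing metric to produce precisely the $mr^2\langle(\Gamma\Omega+\Omega\Gamma),\Omega\rangle$ contribution that, together with $\frac12\langle\I\Omega,\Omega\rangle$, assembles into $\frac12\langle K,\Omega\rangle$.

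Next I would compute the momentum $J_{Y_\eta}$. Since $Y_\eta=(\eta x,0)$ has no component in the $\SO n$ (angular-velocity) directions, only the translational part $m\|\dot x\|^2$ of the Lagrangian contributes, giving $J_{Y_\eta} = 2m\,\dot x\cdot\eta x$ (the factor differs from the rolling-body case because here the kinetic term is $m\|\dot x\|^2$ rather than $\frac m2\|\dot x\|^2$). Restricting to $M$ via the constraint again, $\dot x\cdot\eta x = (r\,g\Omega\gamma + \eta x)\cdot\eta x = r\,(g\Omega\gamma)\cdot\eta x + \|\eta x\|^2$. The anticipated payoff is that the cross term $2mr\,(g\Omega\gamma)\cdot\eta x$ appearing in $E_L|_M$ is cancelled exactly by the corresponding piece of $J_{Y_\eta}|_M$, while the $m\|\eta x\|^2$ term in $E_L|_M$ combines with the $-2m\|\eta x\|^2$ coming from $J_{Y_\eta}|_M$ to leave $-m\|\eta x\|^2$; halving conventions must be tracked carefully here to land on the factor $-\frac m2\|\eta x\|^2$ in \eqref{E:MovEnergyChapBall-nD}.

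\textbf{The main obstacle} I anticipate is bookkeeping rather than conceptual: correctly translating between the matrix Lie-algebra formalism (where $\Omega,\eta\in\so(n)$ and the inner product is the Killing metric $-\frac12\mathrm{Trace}(\zeta_1\zeta_2)$) and the vector operations in $\reali^n$ (where $\eta x$ is matrix-times-vector and $g\Omega\gamma$ is an ordinary vector). In particular I must verify that the quadratic-in-$\Omega$ term genuinely reproduces $mr^2\langle(\Gamma\Omega+\Omega\Gamma),\Omega\rangle$ and not some other symmetrization, using the identity $\|g\Omega\gamma\|^2 = -\Omega\gamma\cdot\Omega\gamma$-type manipulations together with the rank-one structure of $\Gamma$; and I must ensure the cross-term cancellation between $E_L|_M$ and $J_{Y_\eta}|_M$ is exact. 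Once these identifications are confirmed, the conservation of $\Emov L{Y_\eta}|_M$ is already guaranteed by Corollary~\ref{Cor:Sym} (via $Y_\eta-Z=0$), so the Proposition is purely the explicit evaluation, and the proof concludes by collecting the surviving terms into \eqref{E:MovEnergyChapBall-nD}.
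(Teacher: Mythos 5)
Your route is exactly the paper's: restrict $E_L$ and $J_{Y_\eta}$ to $M$ via the constraint \eqref{E:Constraint-n-bis}, recognize $\tfrac12\langle K,\Omega\rangle$ in the $\Omega$-quadratic part, and observe that the cross terms cancel in the difference. The conceptual content (identity $(\Ad_g\Omega)e_n=g\Omega\gamma$, the rank-one structure of $\Gamma$ giving $\langle\Gamma\Omega+\Omega\Gamma,\Omega\rangle=\|\Omega\gamma\|^2$, cancellation of the $(g\Omega\gamma)\cdot\eta x$ terms) is all correctly identified.

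The one substantive problem is the factor of $2$ that you flag under ``halving conventions'' but do not resolve — and as written your computation does \emph{not} land on \eqref{E:MovEnergyChapBall-nD}. Taking \eqref{E:LagChapBall} literally with translational term $m\|\dot x\|^2$, you get $p_x=2m\dot x$, hence $J_{Y_\eta}=2m\,\dot x\cdot\eta x$, and $E_L|_M$ contains $mr^2\|\Omega\gamma\|^2+2mr(g\Omega\gamma)\cdot\eta x+m\|\eta x\|^2$. The cross terms still cancel, but the quadratic piece $mr^2\|\Omega\gamma\|^2$ does \emph{not} assemble with $\tfrac12\langle\I\Omega,\Omega\rangle$ into $\tfrac12\langle K,\Omega\rangle$ for the $K$ of \eqref{K-2} (it would require $K=\I\Omega+2mr^2(\Gamma\Omega+\Omega\Gamma)$), and the surviving constant term is $-m\|\eta x\|^2$, not $-\tfrac m2\|\eta x\|^2$. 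The resolution is that \eqref{E:LagChapBall} as printed carries a typo: the paper's own proof expands $\tfrac m2\|r\omega e_n+\eta x\|^2$, i.e.\ it uses the physically correct kinetic term $\tfrac m2\|\dot x\|^2$ (consistent with \eqref{ell} in the convex-body case), which gives $J_{Y_\eta}=m\,\dot x\cdot\eta x$ and makes every coefficient in \eqref{E:MovEnergyChapBall-nD} come out as stated. You should either correct the Lagrangian normalization at the outset or accept a rescaled $K$; left as is, your bookkeeping yields a formula differing from the Proposition by these factors of two.
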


\begin{proof} The restriction of the energy $E_L$ to $M$ is obtained
by substituting the constraint \eqref{E:Constraint-n} into the
Lagrangian \eqref{E:LagChapBall}. Notice that
\begin{equation*}
 \frac{m}{2}||  r\omega e_n + \eta x||^2=\frac{mr^2}{2}||\Omega \gamma ||^2
 +\frac{m}{2}||\eta x||^2+ mr  ( \omega e_n, \eta x ),
\end{equation*}
where $(\cdot , \cdot)$ is the euclidean scalar product in
$\mathbb{R}^n$ (so far denoted by a dot). Also,
\begin{equation*}
 \frac{1}{2}\langle \I \Omega, \Omega \rangle + \frac{mr^2}{2}||\Omega \gamma ||^2 = \frac{1}{2}\langle K , \Omega
 \rangle. 
\end{equation*}
Therefore,
\begin{equation}
 \label{E:Energy}
 \left . E_L \right |_{M}(x,g,\dot x, \Omega)
  =
 \frac{1}{2} \langle K , \Omega \rangle 
 + \frac{m}{2} ||\eta x||^2 + mr ( \omega e_n , \eta x).
\end{equation}
On the other hand, considering that $Y_\eta(x,g)=(\eta x,0)$ and the
form of the Lagrangian  \eqref{E:LagChapBall}, we have
\begin{equation*}
   J_{Y_\eta}(x,g,\dot x, \Omega)
   \;=\;    m(\dot x, \eta x).
\end{equation*}
Its restriction to $M$ is given by
\begin{equation*}
 \left .J_{Y_\eta} \right |_{M}(x,g,\dot x, \Omega)
   \;=\;    mr(\omega e_n, \eta x)+m\|\eta x\|^2.
\end{equation*}
Hence the moving energy $E_{L,Y_\eta}|_M =\left . E_L \right |_{M}-
\left . J_{Y_\eta} \right |_{M} $ is given by \eqref{E:MovEnergyChapBall-nD}.
\end{proof}

\vskip2mm\noindent
\noindent{\it Remark.} The system is invariant under the action of a
certain subgroup  of $\SO{n-1}$. The precise form of this subgroup
and its action, together with the unreduced and the reduced equations
of motion, is given, for completeness, in the Appendix.

\vfill\eject

\section{Appendix: Equations of motion for the examples}

We present here the (reduced) equations of motion of the examples
treated in sections  \ref{s:ConvexBody} and \ref{s:6}. 

\subsection{The $S^1$-reduced equations of motion of the convex
body that rolls on a rotating plane. }
\label{s:5.3}
In the system studied in section \ref{s:ConvexBody}, not only the
Lagrangian \for{ell} but also the distribution
$\cD$ and the vector field $Z$ as in \for{E:Dist-Rolling-body}
are invariant under the lift of the $S^1$-action \for{lift-S1-action}
to $TQ$. Therefore, this lifted action can be restricted to
the 8-dimensional phase space $M$, and the dynamics is equivariant.
For completeness, we give here the reduced equations of motion on the
quotient space $M/S^1$.

The $S^1$-action \for{lift-S1-action} on $M$ is free. The 7-dimensional
quotient manifold $M/S^1$ can be identified with $\rdue\times
S^2\times \rtre\ni(q,\gamma,\O)$, with projection 
$$
   (q,g,\O) \mapsto\big(q,\gamma(g),\O\big) \,.
$$
We embed $M/S^1$ in $\reali^9\ni(X,\gamma,\O)$, as the submanifold given by 
\beq{restriction}
   \|\gamma\|=1  \,,\qquad (X-\rho)\cdot\gamma=0 
\eeq 
where $\rho$ stands for $\rho=F(\gamma)$
(recall that $X= g^{-1} x$ and  see the first equation of
(\ref{2conditions})).

The definition \for{E:DefAngMomRollBody} of $K$ can be inverted to
give
\begin{equation}
\label{E:OmKGamm}
  \Omega(\gamma,K) = AK + \frac{mA\rho\cdot K}{1-m A\rho\cdot \rho} A\rho,
\end{equation}
where $A = (\I+ m \|\rho\|^2\mathbb{1} )^{-1}$ and $\rho=F(\gamma)$. 
Therefore, as (global) coordinates on $\reali^9$ we may use
$(X,\gamma,K)$ instead of $(X,\gamma,\Omega)$.

\begin{Proposition}
\label{T:RollingBody2}
The equations of motion of the $S^1$-reduced system are the restriction to
the submanifold \for{restriction} of the equations
\begin{equation}
\label{E:FullRollingBody2}
\begin{split}
  \dot K&=K \times \Omega +  m\dot \rho \times (\Omega \times \rho)
  +  m\mathcal{G} \gamma\times \rho
  + m \kappa \rho  \times  (\kappa X - \dot \rho \times \gamma  ),
\\
  \dot X &= (X-\rho )\times (\Omega-\kappa \gamma) ,
  \\
  \dot \gamma & = \gamma \times \Omega,
\end{split}
\end{equation}
on $\reali^9\ni(X,\gamma,K)$, where $\O=\O(\gamma,K)$ is as in
\for{E:OmKGamm} and $\dot \rho$ is
shorthand for $DF(\gamma)(\gamma \times \Omega)$.\footnote{It
is immediate to check that both
$\|\gamma\|^2$ and $\gamma \cdot (X - \rho)$ are first integrals of
\eqref{E:FullRollingBody2}. For the latter one should use the 
kinematic relation $\dot \rho \cdot \gamma=0$.}
\end{Proposition}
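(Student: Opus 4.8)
The plan is to derive the unreduced equations of motion on $TQ$ (equivalently, on the constraint manifold $M$ parametrized by $(X,g,\Omega)$) and then pass them to the quotient $M/S^1$ by expressing everything in terms of the $S^1$-invariant coordinates $(X,\gamma,K)$. Since the energy of the nonholonomic system is a standard Lagrangian-with-reaction-force system, I would start from the Lagrange equations \for{eq:LagrEq} with the reaction force annihilating $\cD$, using the left-trivialized Lagrangian \for{ell} and the constraint \for{dotx-anolonomo}. The cleanest route is to work directly with the body angular momentum about the contact point, which is exactly the quantity $K$ defined in \for{E:DefAngMomRollBody}; the nonholonomic Euler equation for a rolling body typically takes the form $\dot K = K\times\Omega + (\text{torques})$, where the torque terms come from gravity and from the inhomogeneous (rotating-plane) part $Z$ of the constraint.

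First I would compute $\dot K$ from its definition, differentiating $K=\I\Omega + m\rho\times(\Omega\times\rho)$ along motions. The term $K\times\Omega$ arises from the left-trivialization (the standard transport term $\frac{d}{dt}(g^{-1}(\cdot))$ producing an $\Omega\times$ commutator), the $m\dot\rho\times(\Omega\times\rho)$ term from the time-dependence of $\rho=F(\gamma)$ with $\dot\gamma=\gamma\times\Omega$, and the $m\mathcal{G}\,\gamma\times\rho$ term from the gravitational torque $\der V{g}$ written via $V=v\circ\gamma$. The genuinely new contribution, compared with the classical (non-rotating) rolling body, is the $\kappa$-dependent torque $m\kappa\,\rho\times(\kappa X-\dot\rho\times\gamma)$, which must be traced back to the inhomogeneous term $\kappa e_3^s\times(x-g\rho)$ in the no-slip constraint \for{dotx-anolonomo}. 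The auxiliary kinematic equations are easier: $\dot\gamma=\gamma\times\Omega$ is the standard Poisson equation, while $\dot X=(X-\rho)\times(\Omega-\kappa\gamma)$ follows by differentiating $X=g^{-1}x$ and substituting the spatial no-slip condition, noting that $g^{-1}\dot x$ is given by the second relation in \for{2conditions}. The relation \for{E:OmKGamm} inverting $K\mapsto\Omega$ is a direct linear-algebra computation using the Sherman–Morrison formula on $\I+m\|\rho\|^2\mathbb{1}$ perturbed by the rank-one piece, and lets me present the system as a closed first-order system on $\reali^9$.

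The main obstacle I expect is the careful bookkeeping in the $\dot K$ equation, specifically isolating the $\kappa$-terms: one must correctly differentiate the constraint relation, account for the fact that $K$ is the angular momentum about the moving contact point (so $\rho$ and the base point both move), and verify that the reaction force — which by ideality annihilates $\cD$ — contributes no spurious term in the $K$-equation because $K$ is precisely the quantity dual to the distribution $\cD$. A clean way to sidestep brute-force differentiation is to use the balance of angular momentum about the contact point in the spatial frame and then left-trivialize; this automatically produces the $K\times\Omega$ transport term and organizes the torques. Finally, I would confirm consistency by checking the two first integrals flagged in the footnote: $\|\gamma\|^2$ is conserved because $\dot\gamma=\gamma\times\Omega$, and $\gamma\cdot(X-\rho)$ is conserved using $\dot\gamma=\gamma\times\Omega$, the $\dot X$ equation, and the kinematic identity $\dot\rho\cdot\gamma=0$ (which holds since $\rho=F(\gamma)$ and $DF(\gamma)(\gamma\times\Omega)\perp\gamma$ for a convex surface, i.e. $\rho$ lies on the level-independent support structure). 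That these two functions are integrals is exactly the statement that the equations \for{E:FullRollingBody2} restrict to the submanifold \for{restriction}, completing the proof that the stated equations are the correct $S^1$-reduced dynamics.
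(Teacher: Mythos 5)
Your plan follows essentially the same route as the paper's proof: write the Newton--Euler equations with the reaction $(R_1,R_2)$, use d'Alembert's principle on the distribution \for{E:Dist-Rolling-body} to get $R_2=(g^{-1}R_1)\times\rho$, differentiate the no-slip constraint \for{2conditions}, and recast the rotational equation in terms of $K$, with the kinematic equations for $X$ and $\gamma$ and the inversion \for{E:OmKGamm} obtained exactly as you describe. The only (equivalent) organizational difference is that the paper computes $g^{-1}R_1$ explicitly from the differentiated constraint and substitutes it into the Euler equation, whereas you propose to let the reaction cancel automatically in the balance of angular momentum about the contact point --- a cancellation that does occur, since $\rho\times(g^{-1}R_1)+R_2=0$.
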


The equation for $\gamma$ is the well-known evolution equation  of
the Poisson vector $\gamma$ that can be deduced by direct
differentiation of the defining relation $\gamma=g^{-1}e_3^s$. The
evolution equation for $X$ follows by differentiating $X=g^{-1}x$ and
using the nonholonomic constraint \eqref{2conditions}. Both of these
equations are kinematical. The  equation for $K$ is a balance of
momentum. The full dynamics of the system on $M$ is obtained by
adjoining the reconstruction equation $\dot g=g\hat \Omega$.

\begin{proof}
We begin by writing the equations of motion as
\begin{equation} \label{E:MotionReactF}
   m\ddot x= -m\mathcal{G}e_3+ R_1, \qquad  \frac{d}{dt}( \I \Omega)
   = \I \Omega \times \Omega +  R_2,
\end{equation}
where $R=(R_1,R_2)$ is the nonholonomic constraint force/torque.
D'Alembert's principle states that $(R_1,R_2)$ should annihilate any
vector in the distribution $\cD$. In view of
\eqref{E:Dist-Rolling-body} one finds that $R_2=(g^{-1}R_1) \times
\rho$. On the other hand, differentiating the constraint
\eqref{2conditions} and combining it with the first of the above
equations yields
\begin{equation*}
  g^{-1}R_1
  =
  m\mathcal{G}\gamma + m\Omega\times (\Omega \times \rho)
  + m \dot \Omega \times \rho + m \Omega \times \dot \rho
  + m\kappa\gamma\times (g^{-1}\dot x-\Omega \times \rho - \dot \rho) 
\end{equation*}
Using again \for{2conditions}  and \for{restriction} this simplifies
to
\begin{equation*}
\begin{split}
   g^{-1}R_1
   &=
   m\mathcal{G}\gamma + m\Omega\times (\Omega \times \rho) +
   m \dot \Omega \times \rho + m \Omega \times \dot\rho  +
   m\kappa \dot \rho \times \gamma -m\kappa^2(X-\rho).
\end{split}
\end{equation*}
Using this expression and substituting $R_2=(g^{-1}R_1) \times \rho$ 
in the second equation of  \eqref{E:MotionReactF} gives
\begin{equation*}
\begin{split}
  \frac{d}{dt}( \I \Omega)&= \I \Omega \times \Omega
  + m\mathcal{G} \gamma \times \rho
  + m ( \Omega \times (\Omega \times \rho))\times \rho
  + m (\dot \Omega \times \rho)\times \rho
  + m(\Omega \times \dot \rho)\times \rho \\
&\qquad \qquad  
 + m \kappa \rho \times \left  ( \kappa X - \dot \rho \times \gamma \right ).
 \end{split}
\end{equation*}
A simple calculation that uses the definition of $K$ shows that the
above relation is equivalent to the first equation in
\eqref{E:FullRollingBody2}. \end{proof}

\vskip2mm\noindent
\noindent{\it Remark.} For future reference we note that if the body
is a sphere whose  center of mass coincides with its geometric center
(a Chaplygin ball) then $\rho=r\gamma$, where $r>0$ is the sphere's
radius, and  \eqref{E:FullRollingBody2} simplifies to 
\begin{equation}
\label{E:Chapball-3d}
\begin{split}
 \dot K&=K\times \Omega -mr^2\kappa \gamma \times \Omega
 + mr\kappa^2 \gamma \times X, \\
 \dot X &= (\kappa \gamma -\Omega)\times X + r\Omega\times \gamma, \\
 \dot \gamma &=\gamma \times \Omega.
\end{split}
\end{equation}
As it is checked directly, $K\cdot \gamma$ is a first integral of the
system, so conservation of the moving energy
\eqref{E:MovEnergyRollingBody} implies conservation of 
\begin{equation}\label{E:MovEnergyRollingBody3Dbis}
  \tilde E = 
   \frac12 \PS K\O  
   -\frac{m}{2}\kappa^2 \|X \|^2  \,,
\end{equation}
that is also a moving energy (see the remark at the end of the next
section).

\subsection{The equations of motion for an $n$-dimensional Chaplygin
sphere rolling on a steadily rotating hyperplane. } We continue using
the notation introduced in section~\ref{s:6}. 
Here too, \for{K-2} can be solved for $\O$ and as global coordinates
on $M$ we may use $(x,g,K)$.

\begin{Proposition}  \label{T:nChap}
The equations of motion for an $n$ dimensional Chaplygin ball that
rolls without slipping on a hyperplane that steadily rotates with
angular velocity $\eta\in \so(n)$ are given by
\begin{equation}
\label{E:MotionFinal-n}
\begin{split}
 \dot K &=  [K , \Omega] -mr (g^{-1}\eta \dot x)\wedge \gamma, \\
 \dot x &= r(\Ad_g\Omega) e_n + \eta x, \\
 \dot g & =g \Omega,
\end{split}
\end{equation}
where $[\cdot , \cdot ]$ denotes the
matrix commutator in $\so(n)$ and the wedge product of vectors $a, b
\in \reali^n$ is defined by $a\wedge b =ab^T-ba^T$.
\end{Proposition}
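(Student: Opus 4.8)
The plan is to follow the strategy used in the proof of Proposition~\ref{T:RollingBody2}, transcribed into the matrix (Lie-algebra) formalism of $\so(n)$. The last two equations of \eqref{E:MotionFinal-n} require no work: the second is just the rolling constraint \eqref{E:Constraint-n-bis}, and the third is the reconstruction equation obtained from the left-trivialization $\Omega=g^{-1}\dot g$. All the content is therefore in the balance equation for $K$.

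First I would write the constrained Lagrange equations with an undetermined reaction force/torque $R=(R_1,R_2)\in\reali^n\times\so(n)$,
\begin{equation*}
  m\ddot x = R_1, \qquad \frac{d}{dt}(\I\Omega) = [\I\Omega,\Omega] + R_2,
\end{equation*}
(there is no potential, hence no active force term), where the free Euler term $[\I\Omega,\Omega]$ is the image under the Killing identification $\so(n)\cong\so(n)^*$ of $\mathrm{ad}^*_\Omega(\I\Omega)$. The ideality of the constraint then fixes the relation between $R_1$ and $R_2$: requiring $(R_1,\dot x)+\langle R_2,\Omega\rangle=0$ for every $(\dot x,\Omega)\in\cD$, i.e.\ with $\dot x=rg\Omega\gamma$, and using the pairing identity $a^T\Omega\gamma=\langle a\wedge\gamma,\Omega\rangle$ (valid for any $a\in\reali^n$ because $\Omega$ is antisymmetric), I obtain
\begin{equation*}
  R_2 = -\,r\,(g^{-1}R_1)\wedge\gamma,
\end{equation*}
the analogue of the relation $R_2=(g^{-1}R_1)\times\rho$ of the three-dimensional case.

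Next I would eliminate $R_1$. Differentiating the constraint $\dot x=rg\Omega\gamma+\eta x$ along the flow, and using the kinematic identities $\dot g=g\Omega$ and $\dot\gamma=-\Omega\gamma$ (the latter from $\gamma=g^{-1}e_n$), the two terms quadratic in $\Omega$ cancel and one is left with the clean expression $\ddot x=rg\dot\Omega\gamma+\eta\dot x$. Hence $g^{-1}R_1=mr\dot\Omega\gamma+m\,g^{-1}\eta\dot x$, and substituting into the formula for $R_2$ and then into the rotational equation gives
\begin{equation*}
  \frac{d}{dt}(\I\Omega) = [\I\Omega,\Omega] - mr^2(\dot\Omega\gamma)\wedge\gamma - mr(g^{-1}\eta\dot x)\wedge\gamma.
\end{equation*}

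It remains to recognize the right-hand side as $[K,\Omega]-mr(g^{-1}\eta\dot x)\wedge\gamma$ with $K=\I\Omega+mr^2(\Gamma\Omega+\Omega\Gamma)$. Writing $N=mr^2(\Gamma\Omega+\Omega\Gamma)$, this reduces to the purely algebraic identity $\dot N-[N,\Omega]=mr^2(\dot\Omega\gamma)\wedge\gamma$, i.e.
\begin{equation*}
  \frac{d}{dt}(\Gamma\Omega+\Omega\Gamma) - [\Gamma\Omega+\Omega\Gamma,\Omega] = (\dot\Omega\gamma)\wedge\gamma.
\end{equation*}
This is where the main (though still routine) work lies. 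The key input is $\dot\Gamma=[\Gamma,\Omega]$, which follows from $\dot\gamma=-\Omega\gamma$ and $\Gamma=\gamma\gamma^T$; feeding it into the left-hand side makes the $\Gamma\Omega^2$ and $\Omega^2\Gamma$ terms cancel against the commutator, leaving $\Gamma\dot\Omega+\dot\Omega\Gamma$, which is exactly $(\dot\Omega\gamma)\wedge\gamma$ after expanding the wedge. I expect this bookkeeping of the $\Gamma$-terms, together with pinning down the sign in the d'Alembert relation, to be the only delicate points; everything else is dictated by the structure already established in Proposition~\ref{T:RollingBody2}.
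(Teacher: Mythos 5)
Your proposal is correct and follows essentially the same route as the paper's proof: the same reaction-force ansatz, the same use of d'Alembert's principle to get $R_2=-r(g^{-1}R_1)\wedge\gamma$ (the paper writes this as $-r\,\Ad_{g^{-1}}(R_1\wedge e_n)$, which is the same by $\Ad_{g^{-1}}(a\wedge b)=(g^{-1}a)\wedge(g^{-1}b)$), the same differentiation of the constraint to eliminate $R_1$, and the same final identification via $\dot\Gamma=[\Gamma,\Omega]$ and $(\dot\Omega\gamma)\wedge\gamma=\Gamma\dot\Omega+\dot\Omega\Gamma$. The only difference is that you spell out the final algebraic bookkeeping that the paper dismisses as ``seen to be equivalent by using the definition of $K$''.
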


\begin{proof}
The second and third equation of \eqref{E:MotionFinal-n} follow,
respectively, from the constraint \eqref{E:Constraint-n-bis} and the
definition of $\Omega$. So we only need to prove that the first
equation, that is a balance of momentum, holds. Denote by
$(R_1,R_2)\in  \mathbb{R}^n \times \so(n)$ the force/torque exerted
by the constraint.  The equations of motion are
\begin{equation*}
m\ddot x = R_1, \qquad \I \dot \Omega =[ \I \Omega, \Omega] +R_2.
\end{equation*}

Differentiating \eqref{E:Constraint-n-bis} we obtain
\begin{equation}
\label{eq:auxR1}
  R_1=mr(\Ad_g \dot \Omega)e_n+m\eta \dot x.
\end{equation}
In order to determine $R_2$, we use the fact that d'Alembert's
principle implies that the reaction force $(R_1,R_2)$ annihilates 
any $(\dot x, \Omega)$ belonging to the distribution $\mathcal{D}$.
As we now show, this condition gives 
\begin{equation}
\label{eq:R2-Chap}
   R_2= -r \Ad_{g^{-1}}(R_1 \wedge e_n).
\end{equation}
Let $(\dot x, \Omega)\in \mathcal{D}_{(x,g)}$. Then $\dot
x=r(\Ad_g\Omega)e_n$, and  d'Alembert's principle implies that
\begin{equation}
\label{eq:auxR2}
    R_1\cdot (r(\Ad_g\Omega)e_n)+\langle R_2, \Omega \rangle =0.
\end{equation}
Note however that
\begin{equation*}
\begin{split}
   R_1 \cdot \left (  ( \Ad_g\Omega)e_n \right )  
   &= \frac{1}{2}\mbox{Trace} \left ( R_1( ( \Ad_g\Omega)e_n)^T \right )
   +\frac{1}{2}\mbox{Trace} \left (( ( \Ad_g\Omega)e_n) R_1^T \right ) \\
   & =-\frac{1}{2}\mbox{Trace} \left ( R_1e_n^T( \Ad_g\Omega)
   - e_nR_1^T( \Ad_g\Omega)  \right )  \\
   &=  \langle R_1\wedge e_n , \Ad_g\Omega \rangle\\
   & = \langle  \Ad_{g^{-1}}(R_1 \wedge e_n) , \Omega \rangle,
\end{split}
\end{equation*}
so \eqref{eq:auxR2} may be rewritten as
\begin{equation*}
  \langle R_2+r \Ad_{g^{-1}}(R_1 \wedge e_n) , \Omega\rangle=0.
\end{equation*}
Since $\Omega\in \so(n)$ may be chosen arbitrarily, and the pairing
$\langle \cdot , \cdot \rangle$ is non-degenerate, then
\eqref{eq:R2-Chap} holds.

Inserting \eqref{eq:auxR1} into \eqref{eq:R2-Chap} leads to
\begin{equation*}
\begin{split}
\label{E:mu-n}
  R_2=-r\Ad_{g^{-1}}(R_1 \wedge e_n)
  &= 
  - mr^2(\dot \Omega \gamma)\wedge \gamma-mr(g^{-1}\eta \dot x)\wedge\gamma
  \\
  &=
  -mr^2(\Gamma \dot \Omega +\dot \Omega \Gamma)-mr(g^{-1}\eta \dot x)
  \wedge \gamma,
\end{split}
\end{equation*}
where we have used the identity
$\Ad_{g^{-1}}(a\wedge b)=(g^{-1}a)\wedge (g^{-1}b)$, and the
definitions of $\gamma, \Gamma$. Therefore,
\begin{equation*}
\label{E:Motion2-n}
  \I \dot \Omega
  =
  [\I \Omega , \Omega ] -mr^2 (\Gamma \dot \Omega+\Gamma \dot \Omega)
  -mr (g^{-1}\eta \dot x)\wedge \gamma,
\end{equation*}
that is seen to be equivalent to the last equation in
\eqref{E:MotionFinal-n} by using the definition of $K$.
\end{proof}

A direct calculation shows that both the Lagrangian
\eqref{E:LagChapBall} and the affine  constraint distribution
determined by \eqref{E:Constraint-n} are invariant under the tangent
lift of the $H$-action on $Q$ defined by
\begin{equation*}
   h\cdot (x,g)=(hx,hg) \,,
\end{equation*}
where $H$ is the following closed, Lie subgroup of $\SO n$
\begin{equation*}
   H:=\{ h\in \SO n \, : \,  h^{-1}e_n=e_n, \quad \Ad_{h}\eta=\eta \}.
\end{equation*}
The reduced equations are conveniently written in terms of $K$,
$\gamma$, $X:=g^{-1}x$, and $\Xi:=\Ad_{g^{-1}}\eta$. These variables
are not independent but satisfy 
\begin{equation}
\label{E:InvManifold-Symm-Chap-Sphere}
  \Xi \in \mathcal{O}_\eta, \qquad \gamma \cdot X
  =
  r, \qquad \Xi \gamma=0, \qquad \|\gamma\|=1.
\end{equation}
Here $ \mathcal{O}_\eta$ denotes the adjoint orbit through $\eta \in \so(n)$, and the other relations follow from  our previous assumptions $x_n=r$, $\eta e_n=0$, and the definition of the Poisson vector $\gamma$. 

\begin{Proposition}  \label{T:nChap-Symm}
The $H$-reduction of the system \eqref{E:MotionFinal-n}  is given by
the restriction to the invariant manifold
\eqref{E:InvManifold-Symm-Chap-Sphere}  of the following system for
$(K,X,\gamma,\Xi)\in \so(n)\times \mathbb{R}^n \times \mathbb{R}^n
\times \so(n)$ 
\begin{equation}
\label{E:MotionFinal-n-Symm}
\begin{split}
 \dot K &=  [K , \Omega] -mr^2[\Xi , (\Omega \gamma) \wedge \gamma ]
 - mr [ \Xi , [ \Xi , X \wedge \gamma] ] ,\\ 
 \dot X &=  ( \Xi - \Omega) X +r \Omega \gamma, \\
 \dot \gamma &= -\Omega \gamma, \\
 \dot \Xi & = [\Xi , \Omega ].
\end{split}
\end{equation}
Moreover, the conserved moving energy \eqref{E:MovEnergyChapBall-nD}
is $H$-invariant and may be written as
\begin{equation}
\label{E:red-moving-energy-nD-ChapBall}
   \Emov L {Y_\eta}|_M (\Omega,X,\gamma,\Xi) = 
   \frac{1}{2} \langle K , \Omega
  \rangle -\frac{m}{2}  \|\Xi X\|^2
\,.
\end{equation}
\end{Proposition}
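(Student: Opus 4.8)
The plan is to realize $(K,X,\gamma,\Xi)$ as $H$-invariant functions on $M$ and to differentiate them along the unreduced flow \eqref{E:MotionFinal-n}. First I would record that each of the four variables genuinely descends to the quotient. Under $h\cdot(x,g)=(hx,hg)$ one has $g\mapsto hg$, so $\Omega=g^{-1}\dot g\mapsto\Omega$ and $\gamma=g^{-1}e_n\mapsto g^{-1}h^{-1}e_n=\gamma$ because $h^{-1}e_n=e_n$; hence $\Gamma=\gamma\gamma^T$ and $K=\I\Omega+mr^2(\Gamma\Omega+\Omega\Gamma)$ are invariant. Likewise $X=g^{-1}x\mapsto X$, and $\Xi=\Ad_{g^{-1}}\eta\mapsto\Ad_{g^{-1}}\Ad_{h^{-1}}\eta=\Xi$ since $\Ad_h\eta=\eta$. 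The relations \eqref{E:InvManifold-Symm-Chap-Sphere} then hold by construction: $\Xi$ is a conjugate of $\eta$, hence lies on $\mathcal{O}_\eta$; $\gamma\cdot X=\gamma^Tg^{-1}x=(g\gamma)^Tx=e_n^Tx=x_n=r$; $\Xi\gamma=g^{-1}\eta gg^{-1}e_n=g^{-1}\eta e_n=0$ by \eqref{E:Cond-eta}; and $\|\gamma\|=1$.

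Next I would obtain the three kinematic equations by direct differentiation, using $\dot g=g\Omega$ and hence $\tfrac{d}{dt}g^{-1}=-\Omega g^{-1}$. This gives immediately $\dot\gamma=-\Omega\gamma$ and $\dot\Xi=-\Omega\Xi+\Xi\Omega=[\Xi,\Omega]$. For $X$ one computes $\dot X=-\Omega X+g^{-1}\dot x$, and substituting the constraint \eqref{E:Constraint-n-bis} in the form $g^{-1}\dot x=r\Omega\gamma+\Xi X$ yields $\dot X=(\Xi-\Omega)X+r\Omega\gamma$. These equations involve no reaction force and are routine.

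The substantive step is the balance law for $K$. Starting from the first line of \eqref{E:MotionFinal-n}, the term $[K,\Omega]$ is already in reduced variables, so only the reaction term $-mr(g^{-1}\eta\dot x)\wedge\gamma$ must be rewritten. Using \eqref{E:Constraint-n-bis} I would compute
\[
  g^{-1}\eta\dot x = r\,\Xi\Omega\gamma+\Xi^2X ,
\]
where the first summand comes from $g^{-1}\eta g\,\Omega\,g^{-1}e_n$ and the second from $g^{-1}\eta^2x=(g^{-1}\eta g)^2g^{-1}x$. The crux is then the $\so(n)$-equivariance of the wedge, the identity $[\Xi,u\wedge v]=(\Xi u)\wedge v+u\wedge(\Xi v)$ for $\Xi\in\so(n)$, which I would prove in one line from $v^T\Xi=-(\Xi v)^T$. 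Combined with the constraint $\Xi\gamma=0$ this collapses the two reaction contributions into nested brackets: $(\Xi\Omega\gamma)\wedge\gamma=[\Xi,(\Omega\gamma)\wedge\gamma]$ and $(\Xi^2X)\wedge\gamma=[\Xi,(\Xi X)\wedge\gamma]=[\Xi,[\Xi,X\wedge\gamma]]$. Multiplying by the appropriate factors of $mr^2$ and $mr$ reproduces exactly the reaction terms in \eqref{E:MotionFinal-n-Symm}. I expect this rewriting---spotting that the equivariance identity together with $\Xi\gamma=0$ turns the raw reaction force into the displayed double commutator---to be the main obstacle; everything else is bookkeeping.

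Finally, for the reduced moving energy I would note that $\Xi X=g^{-1}\eta g\,g^{-1}x=g^{-1}\eta x$, so $\|\Xi X\|=\|\eta x\|$ because $g\in\SO n$ is an isometry, while $\langle K,\Omega\rangle$ is already a function of reduced variables; substituting into \eqref{E:MovEnergyChapBall-nD} gives \eqref{E:red-moving-energy-nD-ChapBall}. Its $H$-invariance follows either from the invariance of $K,\Omega,\Xi,X$ established above or directly, since $\Ad_h\eta=\eta$ forces $\eta h=h\eta$ and hence $\|\eta(hx)\|=\|\eta x\|$. As a consistency check I would verify that the four relations \eqref{E:InvManifold-Symm-Chap-Sphere} are preserved by \eqref{E:MotionFinal-n-Symm}---$\Xi$ stays isospectral, and $\tfrac{d}{dt}(\gamma\cdot X)$, $\tfrac{d}{dt}(\Xi\gamma)$ and $\tfrac{d}{dt}\|\gamma\|^2$ all vanish using $\Omega,\Xi\in\so(n)$ and $\Xi\gamma=0$---so that the restriction asserted in the statement is well posed.
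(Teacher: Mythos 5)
Your proposal is correct and follows essentially the same route as the paper's proof: the same kinematic differentiations, the same computation $g^{-1}\eta\dot x = r\,\Xi\Omega\gamma+\Xi^2X$, the same key identity $(\Xi a)\wedge\gamma=[\Xi,a\wedge\gamma]$ (which the paper states directly from $\Xi\gamma=0$ and you derive from the Leibniz rule for the wedge), and the same observation $\|\Xi X\|=\|\eta x\|$ for the energy. The extra verifications you include (descent of the variables to the quotient and preservation of the invariant manifold) are sound but not part of the paper's argument.
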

\begin{proof}
The equations for $\gamma$ and $\Xi$ are purely kinematical and follow from  the definition of $\Omega=g^{-1}\dot g$. 
The equation for $X$ follows by differentiating $X=g^{-1}x$ and using the constraint  \eqref{E:Constraint-n-bis}.
On the other hand, using again \eqref{E:Constraint-n-bis}, we find
\begin{equation*}
g^{-1}\eta\dot x= r\Xi\Omega \gamma + \Xi^2 X.
\end{equation*}
Now, given that $\Xi \gamma=0$, we have $(\Xi a)\wedge \gamma = [ \Xi, a\wedge \gamma]$ for all $a\in \mathbb{R}^n$.
Therefore 
\begin{equation*}
(\Xi\Omega \gamma)\wedge \gamma = [\Xi, (\Omega \gamma)\wedge \gamma], \qquad  (\Xi^2 X)\wedge \gamma = [\Xi, [\Xi, X\wedge \gamma]].
\end{equation*}
Whence,
\begin{equation*}
(g^{-1}\eta\dot x)\wedge \gamma = r [\Xi, (\Omega \gamma)\wedge \gamma] + [\Xi, [\Xi, X\wedge \gamma]],
\end{equation*}
and the first equation in \eqref{E:MotionFinal-n-Symm} follows from the first equation in \eqref{E:MotionFinal-n}.

Finally, the statement for the conserved moving energy follows by
noticing that $\|\eta x\|^2=\| \Xi X\|^2$.~\end{proof}

\vskip2mm\noindent
\noindent{\it Remark.} The symmetry group $H$ is a subgroup of the
copy of $\SO {n-1}$ inside $\SO n$ that fixes~$e_n$. It is in fact a
proper subgroup except when $n=3$ where $H=\SO 2$. 
An explanation for this is that in 3 dimensions  the vector that is normal to the plane where the rolling takes place is also
the axis of rotation of the plane.
 In dimension $n\geq 4$  this interpretation of the vector that is normal to the hyperplane where the rolling takes place is no longer possible
 and, consequently, the symmetry group of the problem is smaller.

In the special case $n=3$, one may identify $\Xi\in \so(3)$ with $\kappa
\gamma \in \mathbb{R}^3$, where $\kappa \in \mathbb{R}$ is the
angular speed of rotation of the plane where the rolling takes place.
One may then check that \eqref{E:MotionFinal-n-Symm} is equivalent to
\eqref{E:Chapball-3d} via the hat map isomorphism between $\so(3)$
and $\mathbb{R}^3$,  and that the conserved moving energy
\eqref{E:red-moving-energy-nD-ChapBall} differs from
\eqref{E:MovEnergyRollingBody3Dbis} by a constant.

\vskip1cm\noindent
{\bf Acknowledgments.} LGN acknowledges support of a Newton Advanced
Fellowship (ref: NA140017) from the Royal Society that, among other
things, funded a joint visit of FF and himself to the University of
Manchester in October 2016 where part of this work was completed. He
is also thankful to the University of Padova for its hospitality in
July 2016, and he acknowledges the support for his research from the
Program UNAM-DGAPA-PAPIIT-IA103815.

{\small

}

\end{document}